\theoremstyle{plain}
\newtheorem{thm}{Theorem}[section]
\newtheorem{lemma}[thm]{Lemma}
\newtheorem{prop}[thm]{Proposition}
\newtheorem{cor}[thm]{Corollary}
\theoremstyle{definition}
\newtheorem{defn}[thm]{Definition}
\theoremstyle{remark}
\newtheorem{remark}[thm]{Remark}
\newcommand{\nc}{\newcommand}
\def\makeop#1{\expandafter\def\csname#1\endcsname
  {\mathop{\rm #1}\nolimits}\ignorespaces}
\def\makebb#1{\expandafter\def
  \csname bb#1\endcsname{{\mathbb{#1}}}\ignorespaces}
\def\makebf#1{\expandafter\def\csname bf#1\endcsname{{\bf
      #1}}\ignorespaces} 
\def\makegr#1{\expandafter\def
  \csname gr#1\endcsname{{\mathfrak{#1}}}\ignorespaces}
\def\makescr#1{\expandafter\def
  \csname scr#1\endcsname{{\EuScript{#1}}}\ignorespaces}
\def\makecal#1{\expandafter\def\csname cal#1\endcsname{{\mathcal
      #1}}\ignorespaces} 
\def\doLetters#1{#1A #1B #1C #1D #1E #1F #1G #1H #1I #1J #1K #1L #1M
                 #1N #1O #1P #1Q #1R #1S #1T #1U #1V #1W #1X #1Y #1Z}
\def\doletters#1{#1a #1b #1c #1d #1e #1f #1g #1h #1i #1j #1k #1l #1m
                 #1n #1o #1p #1q #1r #1s #1t #1u #1v #1w #1x #1y #1z}
     \def\qed{\qedmark\medbreak}%
\def\qedmark{{\enspace\vrule height 6pt width 5pt depth 1.5pt}}%
\def\Fp{{\bbF}_p}
\def\Qbar{\overline{\bbQ}}
\newcommand{\Z}{\mathbb Z}
\newcommand{\Q}{\mathbb Q}
\newcommand{\F}{\mathbb F}
\newcommand{\npr}{\noindent }
\nc{\embed}{\hookrightarrow}
\newcommand{\ch}{characteristic }
\nc{\ol}{\overline}
\nc{\wt}{\widetilde}
\nc{\opp}{\mathrm{opp}}
\begin{document}
\renewcommand{\thefootnote}{\fnsymbol{footnote}}
\setcounter{footnote}{-1}
\numberwithin{equation}{section}


\title{Endomorphism algebras of QM abelian surfaces}
\author{Chia-Fu Yu}
\address{
Institute of Mathematics, Academia Sinica and NCTS (Taipei Office)\\
6th Floor, Astronomy Mathematics Building \\
No. 1, Roosevelt Rd. Sec. 4 \\ 
Taipei, Taiwan, 10617} 
\email{chiafu@math.sinica.edu.tw}


\date{\today.} 
\subjclass[2000]{11}              
\keywords{endomorphism algebras, QM abelian surfaces, quaternion
  algebras}  
                                         
\begin{abstract}
We determine endomorphism algebras of abelian surfaces
with quaternion multiplication. 
\end{abstract} 

\maketitle

\def\Mat{{\rm Mat}}
\def\c{{\rm c}}
\def\i{{\rm i}}

\section{Introduction}
\label{sec:01}

In this paper, we determine all possible endomorphism algebras
of abelian surfaces with quaternion multiplication (QM). 
Let $D$ be an indefinite quaternion division algebra 
over the field $\Q$ of rational
numbers. We would like to
find out all $\Q$-algebras $E$ containing $D$ which appear as endomorphism
algebras 
of abelian surfaces. 
In other words, we would like to know
which endomorphism algebra appears in the Shimura curve $X_D$
associated to the quaternion algebra $D$ (and with additional
data). Our main result states as follows.

\begin{thm}\label{11}
  Let $D$ be an indefinite quaternion division algebra over $\Q$, and
  let $A$ be an abelian surface over a field $k$ 
  with quaternion multiplication by $D$,
  i.e. an abelian surface together with a $\Q$-algebra 
  embedding $\iota: D\to E:=\End^0(A):=\End(A)\otimes_{\Z} \Q$. 
  \begin{itemize}
  \item [(1)] Suppose that $A$ is not simple. Then $A$ is isogenous to
    $C^2$ for an elliptic curve $C$ over $k$ and the algebra 
    $E$ is isomorphic to one of the
    following
    \begin{itemize}
    \item [(i)] $\Mat_2(K)$, where $K$ is any imaginary quadratic field
    which splits $D$,
    or
    \item [(ii)] $\Mat_2(D_{p,\infty})$, where $p$ is a prime and  
    $D_{p,\infty}$ is the
    quaternion algebra over $\Q$ ramified exactly at $\{p,\infty\}$. 
    This occurs if and only if $C$ is a
    supersingular elliptic curve over $k$ with $k\supset \F_{p^2}$. 
    \end{itemize}
    \item [(2)] Suppose that $A$ is simple. Then we have
   \begin{itemize}
  \item[(i)] $E\simeq D$, or
  \item[(ii)] $E\simeq D_K:=D\otimes_\Q K$ 
    for some imaginary quadratic field $K$. In this case, 
    the abelian surface $A$ is in \ch $p>0$ for some prime
    $p$ and it is supersingular. 
  \end{itemize}
  \end{itemize}
\end{thm}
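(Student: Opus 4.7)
The plan is to split on whether $A$ is simple and to invoke Albert's classification of endomorphism algebras, enhanced in positive characteristic by the Honda--Tate theory of Weil numbers.

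\emph{Part (1), $A$ not simple.} Since $\dim A = 2$, $A$ is isogenous to $C_1\times C_2$ for elliptic curves $C_i$. If $C_1\not\sim C_2$, then $\End^0(A)\cong \End^0(C_1)\times \End^0(C_2)$; since $D$ is simple, at least one projection of an embedding $D\hookrightarrow \End^0(C_1)\times\End^0(C_2)$ is injective, yielding $D\hookrightarrow \End^0(C_i)$ for some $i$, which is impossible because no elliptic-curve endomorphism algebra ($\Q$, an imaginary quadratic field, or $D_{p,\infty}$) contains the indefinite quaternion division algebra $D$. Hence $A\sim C^2$ and $E\cong \Mat_2(\End^0(C))$. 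I then run through the three possibilities for $\End^0(C)$: the case $\Q$ is excluded ($\Mat_2(\Q)$ has no quaternion division subalgebra); an imaginary quadratic $K$ gives $\Mat_2(K)$, into which $D$ embeds iff $D\otimes_\Q K\cong \Mat_2(K)$, i.e.\ iff $K$ splits $D$; and $D_{p,\infty}$ (equivalent to $C$ supersingular with $k\supset \F_{p^2}$, a standard fact) gives $\Mat_2(D_{p,\infty})$, into which $D$ always embeds via the Brauer-group identity $\Mat_2(D_{p,\infty})\cong D\otimes_\Q C'$, where $C'$ is a $4$-dimensional central simple $\Q$-algebra representing the Brauer class $[D]\cdot [D_{p,\infty}]$.

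\emph{Part (2), $A$ simple.} Now $E$ is a division $\Q$-algebra containing $D$. By the double centralizer theorem applied to the central simple $\Q$-subalgebra $D\subset E$, one has $E\cong D\otimes_\Q F$ where $F=C_E(D)$ is a division $\Q$-algebra with center $Z(E)$. In characteristic $0$, Albert's dimension bounds for simple abelian surfaces give $\dim_\Q E\le 4$, so $F=\Q$ and $E=D$. In characteristic $p>0$, I reduce to the case of $A$ defined over a finite field (by spreading out and specializing, using that $\End^0$ only grows under specialization) and invoke Honda--Tate: for simple $A/\F_q$ of dimension $2$, the Weil number $\pi$ satisfies $[\Q(\pi):\Q]\cdot d=4$ where $d$ is the Schur index, and $\dim_\Q E>4$ forces $[\Q(\pi):\Q]=d=2$. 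A direct computation of the local invariants at the primes over $p$ then yields $v_\mathfrak{p}(\pi)/v_\mathfrak{p}(q)=1/2$, so the Newton polygon is isoclinic of slope $1/2$ and $A$ is supersingular. The center $F=\Q(\pi)$ is imaginary quadratic: the real-quadratic case is excluded because at a real place the Honda--Tate invariant of $E$ would be $1/2$, while $\inv_\infty(D\otimes F)=0$ since $D$ is indefinite. Hence $E\cong D\otimes_\Q K=D_K$ with $K$ imaginary quadratic.

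The main obstacle is the characteristic-$p$ analysis in Part (2)(ii): one must rigorously reduce from an arbitrary base $k$ to a finite field, then carry out the Honda--Tate/Newton-polygon computation to pin down simultaneously that $A$ is supersingular and that $Z(E)$ is imaginary quadratic. Part (1) is essentially a Brauer-group exercise, and the double centralizer step in Part (2) is standard theory of central simple algebras.
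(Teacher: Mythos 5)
Your Part (1) is correct and is essentially the paper's argument: the Brauer-class identity you use to embed $D$ into $\Mat_2(D_{p,\infty})$ is just a repackaging of the capacity criterion (Proposition~\ref{22}), and the splitting criterion for $\Mat_2(K)$ matches the paper's. Likewise the double-centralizer decomposition $E\cong D\otimes_\Q C_E(D)$ together with the characteristic-zero Albert bound $\dim_\Q E\le 4$ is a clean, valid route to case (2)(i).

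The genuine gap is in the characteristic-$p$ half of Part (2). Reducing to a finite field ``by spreading out and specializing, using that $\End^0$ only grows under specialization'' does not deliver what you then use. First, the special fibre need not be simple and its endomorphism algebra may be strictly larger (typically $\Mat_2(D_{p,\infty})$); all the specialization gives is an embedding $E\hookrightarrow \End^0(A_u)$, so you are not entitled to apply the Honda--Tate relation $[\Q(\pi):\Q]\cdot d=2g$ to $E$ itself, and your identification of $Z(E)$ with $\Q(\pi)$, your exclusion of a real quadratic centre via invariants at the archimedean places, and your local-invariant/slope computation are all statements about $\End^0(A_u)$ rather than about $E$. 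Second, and fatally for the supersingularity claim, Newton polygons rise under specialization: supersingularity of the special fibre says nothing about $A/k$. The paper's route avoids both problems: since $\dim_\Q E>4$ forces $E$ to contain a quartic CM algebra, $A$ has smCM, and Grothendieck's theorem makes $A\otimes_k k'$ isogenous to an abelian surface $A_0$ over a finite field; an isogeny (unlike a specialization) preserves the Newton polygon, and even then one must separate the two cases $E=\End^0(A_0)$ and $\End^0(A_0)\simeq\Mat_2(D_{p,\infty})$ (Lemma~\ref{26}) before computing invariants, after which $p\mid \disc(D)$ and the $p$-divisible group argument (Lemma~\ref{27}) give supersingularity of $A$ over $k$ itself. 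Note also that the exclusion of a real quadratic centre can be done over an arbitrary base, with no reduction at all, by Mumford's restriction that a totally real quartic subfield of $\End^0(A)$ forces $4\mid\dim A$; your archimedean-invariant argument only makes sense after a correct finite-field reduction. So the skeleton of your Part (2) is right, but the reduction step as stated would fail, and with it both the determination of $Z(E)$ and the supersingularity of $A$.
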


Recall that an abelian
variety in \ch $p>0$ is said to be {\it supersingular} if it is
isogenous to a product of supersingular elliptic curves over a finite
field extension. 
The case (i) of Theorem~\ref{11} (2) occurs as we can take a
generic complex abelian surface with QM by $D$. 
The case (ii) of
Theorem~\ref{11} (2) occurs only when the quaternion algebra $D$
satisfies a special condition and the simple abelian variety $A$ is
necessarily supersingular. In this case, the algebra $E$ is
obviously determined by its center $K$, 
and we show that there are only a finite list of possibilities for
such $K$. More precisely, we have the following result.

\begin{thm}[Theorem~\ref{210}]\label{12}
  Let $A$ be a simple supersingular
  abelian surface over a finite field $\F_q$ of \ch $p>0$ with
  quaternion multiplication by $D$. Let $E:=\End^0(A)$ be the
  endomorphism algebra of $A$, and let $S$ be the
  discriminant of $D$. Then  
  \begin{itemize}
  \item [(1)] The center $K$ of $E$ is isomorphic 
    to $\Q(\zeta_n)$ for $n=3, 4$, or $6$.
  \item [(2)] One has $p\mid S$ and $p \equiv 1 \pmod n$, where $n$ is as
    above, and for any other prime $\ell \mid S$, one has either
    $\ell | n$ or $\ell \equiv -1 \pmod n$, that is, $\ell$ does not
    split in the quadratic field $\Q(\zeta_n)$.
  \item [(3)] $E\simeq D\otimes_\Q K$.   
  \end{itemize}  
\end{thm}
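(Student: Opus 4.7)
The plan is to apply Honda--Tate theory for simple abelian varieties over finite fields, combined with the given inclusion $D\hookrightarrow E$. Let $\pi\in E$ be the $q$-Frobenius and $K=\Q(\pi)$, which is the center of $E$ by Tate's theorem. The dimension formula
\[
  [E:K]^{1/2}\cdot [K:\Q]=2\dim A=4
\]
allows $[K:\Q]\in\{1,2,4\}$. The case $[K:\Q]=4$ forces $E=K$ to be commutative, contradicting $D\subset E$; the case $[K:\Q]=1$ forces $\pi=\pm\sqrt{q}$, whence $A$ is isogenous to the square of a supersingular elliptic curve and therefore not simple. Hence $K$ is imaginary quadratic and $E$ is a quaternion $K$-algebra. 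Since $K$ is central in $E$ it commutes with $D$, so the natural map $D\otimes_\Q K\to E$ is a nonzero homomorphism of central simple $K$-algebras of equal $K$-dimension, hence an isomorphism. This already establishes part~(3).

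Next I analyse the local invariants of $E$ provided by Tate. Since $A$ is supersingular, every Newton slope at $p$ equals $1/2$, so $v(\pi)/v(q)=1/2$ for every place $v\mid p$ of $K$, giving $\inv_v(E)\equiv [K_v:\Q_p]/2\pmod 1$. If $p$ were inert or ramified in $K$, the unique place above $p$ would contribute invariant $0$; since $K$ is CM there are no real archimedean contributions either, so $E$ would split globally. Then $E\simeq \Mat_2(K)$ would force $A$ to be isogenous to $B^2$ for an elliptic curve $B/\F_q$ with $\End^0(B)=K$, contradicting simplicity. Hence $p$ splits as $p=\mathfrak{p}\bar{\mathfrak{p}}$ and $E$ is ramified exactly at $\{\mathfrak{p},\bar{\mathfrak{p}}\}$. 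Combining this with $E\simeq D\otimes_\Q K$ and comparing invariants at the remaining primes yields $p\mid S$ and, for every other $\ell\mid S$, the invariant $[K_v:\Q_\ell]/2$ of $D\otimes K$ at a place $v\mid\ell$ must vanish, i.e.\ $\ell$ is non-split in $K$.

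It remains to identify $K$ itself. Writing $q=p^a$, the constraint $v_\mathfrak{p}(\pi)=v_{\bar{\mathfrak{p}}}(\pi)=a/2$ forces $a$ to be even and $(\pi)=\mathfrak{p}^{a/2}\bar{\mathfrak{p}}^{a/2}=(p)^{a/2}$ as ideals of $\mathcal{O}_K$, so $\pi=p^{a/2}u$ for some unit $u\in\mathcal{O}_K^\times$. The requirement $K=\Q(\pi)$ is equivalent to $u\notin\Q$, i.e.\ $u\neq\pm 1$. Since $K$ is imaginary quadratic, $\mathcal{O}_K^\times$ is the finite group of roots of unity in $K$, and admits an element other than $\pm 1$ only when $K\in\{\Q(i),\Q(\sqrt{-3})\}$, i.e.\ $K=\Q(\zeta_n)$ for some $n\in\{3,4,6\}$; this is part~(1). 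Part~(2) is then read off from cyclotomic reciprocity: $p$ splits completely in $\Q(\zeta_n)$ iff $p\equiv 1\pmod n$, and for $\ell\nmid n$ the prime $\ell$ is inert iff its Frobenius in $\Gal(\Q(\zeta_n)/\Q)\simeq(\Z/n\Z)^\times=\{\pm 1\}$ is nontrivial, i.e.\ $\ell\equiv -1\pmod n$; the case $\ell\mid n$ is ramification. The main obstacle in this plan is the slope analysis leading to the clean decomposition $\pi=p^{a/2}\cdot u$ with $u\in\mathcal{O}_K^\times$; once that is in place, the classification is forced by the elementary fact that only two imaginary quadratic fields carry nontrivial units.
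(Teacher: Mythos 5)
Your argument is essentially correct and, modulo one gap, proves the theorem; but the gap should be named. After eliminating $[K:\Q]\in\{1,4\}$ you assert ``hence $K$ is imaginary quadratic,'' yet $[K:\Q]=2$ also allows the totally real case $K=\Q(\sqrt{p})$, which genuinely occurs for simple supersingular surfaces over $\F_q$: take $\pi=\pm\sqrt{q}$ with $q$ a non-square, so that $E$ is the quaternion algebra over $\Q(\sqrt{p})$ ramified exactly at the two archimedean places. This case is ruled out only by the QM hypothesis, and your later invariant analysis cannot do it as written, since it already invokes ``$K$ is CM'' to discard archimedean contributions. The paper excludes it in the proof of Proposition~\ref{25}: if $K$ were real, $E\supset D\otimes_\Q K$ would contain a totally real field of degree $4$, forcing $4\mid\dim A$. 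Alternatively, your own tools close it in one line: your proof of (3) uses only that $K$ is the center and $[E:K]=4$, hence applies verbatim for real $K$; but then $E\simeq D\otimes_\Q K$ would be split at both archimedean places (as $D$ is indefinite), while Honda--Tate gives invariant $1/2$ there, a contradiction. A cosmetic point: since $A$ is simple, $E$ is a division algebra, so the contradiction when all local invariants vanish is best phrased as $E=K$ and $2\dim A=[E:K]^{1/2}[K:\Q]=2$, rather than ``$E\simeq\Mat_2(K)$''.

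Once that is repaired, your route differs from the paper's in two pleasant ways. You obtain (3) directly from the surjectivity of the natural map $D\otimes_\Q K\to E$ between central simple $K$-algebras of equal dimension, where the paper invokes its embedding criterion (Proposition~\ref{22}, the capacity condition). And you bypass the paper's classification of supersingular Weil numbers (Theorem~\ref{28}) and the case analysis of Lemma~\ref{29} (whose odd-exponent cases $n=4$, $n=8$ with $p=2$, $n=12$ with $p=3$ the paper must then discard because $p$ ramifies, violating its condition ($*$)): instead, splitness of $p$ plus the slope-$1/2$ condition forces $a$ even and $\pi=p^{a/2}u$ with $u\in\O_K^\times$, and the finiteness of the unit group of an imaginary quadratic field pins down $K\in\{\Q(\zeta_3),\Q(\zeta_4)\}$. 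This is equivalent in content to the paper's $\pi=\sqrt{q}\,\zeta$ analysis but more self-contained; the congruences in (2) then follow from splitting behavior in $\Q(\zeta_n)$ exactly as in the paper.
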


According to Theorem~\ref{12}, there are three possibilities for
endomorphism algebras $E$ of simple supersingular abelian surfaces
over finite fields: $E\simeq D\otimes_\Q \Q(\zeta_n)$ for $n=3,4, 6$. 
However, not all of them occur; It depends on the quaternion algebra $D$.
The algebra $D\otimes_\Q \Q(\zeta_n)$ occurs if and only if there is
exactly one prime $p$ dividing $S$ such that $p\equiv 1 \pmod n$ 
(This prime $p$ is the \ch of the base field). 

The results of this paper (Theorems~\ref{11} and \ref{12}) contribute
new cases to the problem about semi-simple algebras that can 
be realized as endomorphism algebras of abelian varieties. 
See Oort \cite{oort:endo} and the references therein for quite
complete discussions of this problem.  

Supersingular abelian surfaces appear in the classification of
endomorphism algebras of QM abelian surfaces. We refer the reader to
C.~Xing \cite{xing:ss1996} for some aspects of supersingular abelian
surfaces over finite fields. 
   

Let $J$ be the Jacobian of a smooth, projective, geometrically
connected algebraic curve of genus $2$ over a number field $K$. In
\cite{baba-granath:QM} Baba and Granath showed that if the following
three 
conditions hold: (1) $J$ has QM by a maximal quaternion order
$\Lambda_6$ of discriminant $6$, (2) $J$ has the 
field of moduli equal to $\Q$, and (3)
$J$ has potentially smooth stable reduction at both $2$ and $3$, 
then QM abelian surface $J$ has superspecial good reduction at 
infinitely many primes. In \cite{dieulefait-rotger:QM} Dieulefait and 
Rotger studied
the arithmetic of the Jacobians $J$ whose endomorphism algebra 
$\End^0_{\Qbar}(J)$ is an indefinite quaternion algebra. 
They determine all possible Galois groups of minimal
fields of definition and possible endomorphism rings $\End_{K}(J)$
defined over a smaller number field $K$ under a certain integral
condition. We refer the
reader to \cite{dieulefait-rotger:QM} for the list of Galois groups
and more detail discussions. 

An analogous question to our main results (Theorems~\ref{11} and
\ref{12}) is: What are the endomorphism algebras of abelian
varieties with real multiplication (RM)? That is, one considers 
the same problem as treated in this paper but for Hilbert modular 
varieties rather than Shimura curves.  
This problem has been done by 
Chai \cite[Section 3]{chai:ho}.  
The classification has its own interest; this also plays a role in 
the proof of Chai's theorem on the density of ordinary Hecke 
orbits in Siegel modular varieties. 
As the reader may be also interested in this result 
due to Chai, we include an expository and elementary account for the 
reader's convenience in Section~\ref{sec:03}. Using the similar method 
as in Section~\ref{sec:02}, 
we make Chai's result more explicit about the simple algebras 
that actually occur as endomorphism algebras of RM abelian
varieties.

\section{Proof of Main Results}
\label{sec:02}

\subsection{Embeddings of simple algebras}
\label{sec:21}
We recall some basic definitions for central simple algebras; 
see \cite{reiner:mo}.

\begin{defn}
  Let $B$ be a (f.d.) central simple algebra over a field
  $F$. The {\it degree}, {\it capacity}, and {\it index} of $B$
are defined as
\[ \deg(B):=\sqrt{[B:F]},\quad \c(B):=n,\quad
  \i(B):=\sqrt{[\Delta:F]}, \] 
respectively,  
if $B\cong \Mat_n(\Delta)$, where $\Delta$ is a division algebra over
$F$, which is uniquely determined by $B$ up to isomorphism.
The algebra $\Delta$ is also called the {\it division part} of $B$.
\end{defn}

\begin{prop}\label{22}
  Let $E$ and $B$ be two finite-dimensional simple algebras over a
  field $F$ with centers $Z$ and $K$, respectively. Suppose
  that $Z$ and $K$ are linearly disjoint over $F$, that is, the
  $F$-algebra $L:=Z\otimes_F K$ is a field. Let $E\simeq
  \Mat_n(\Delta)$, where $\Delta$ is the division part of $E$. Then
  there is an $F$-algebra embedding of $B$ into $E$ if and only if 
  \begin{equation}
    \label{eq:11}
    [B:F]\mid n \cdot c, 
  \end{equation}
  where $c$ is the capacity of the central simple algebra
  $\Delta\otimes_F B^{\rm o}$ over $L$:
\[ \Delta\otimes_F B^{\rm o}\simeq 
  \Delta\otimes_Z (Z\otimes_F K) \otimes_K B^{\rm o}\simeq 
  (\Delta\otimes_Z L)\otimes_L (L\otimes_K
  B^{\rm o}), \]
and $B^{\rm o}$ denotes the opposite algebra of $B$.
\end{prop}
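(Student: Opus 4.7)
The plan is to reinterpret $F$-algebra embeddings $\iota\colon B \hookrightarrow E$ as module structures and then exploit the central simple structure of the algebra $A := B \otimes_F E^{\opp}$ over the field $L = Z \otimes_F K$. First, I would observe that giving an $F$-algebra homomorphism $\iota\colon B \to E$ (which is automatically injective because $B$ is simple) is the same as endowing $E$ with a left $A$-module structure whose restriction to the subalgebra $E^{\opp} \subset A$ coincides with the standard right-multiplication action of $E$ on itself. This is a direct unwinding of definitions, using that the centralizer of right multiplication by $E$ inside $\End_F(E)$ is the image of left multiplication by $E$.

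Next, I would remove the apparent extra constraint that $E^{\opp}$ act in the standard way by invoking Skolem--Noether. Any $A$-module structure on the $F$-vector space $E$ restricts to an $F$-algebra embedding $E^{\opp} \hookrightarrow \End_F(E)$, and since $\End_F(E) \simeq \Mat_d(F)$ is central simple over $F$ while $E^{\opp}$ is simple, any two such embeddings are conjugate by some $u \in \GL_F(E)$. Conjugating the given $A$-module structure by $u$ yields an equivalent one in which $E^{\opp}$ acts by standard right multiplication. Hence an embedding $B \hookrightarrow E$ exists if and only if $E$ admits any left $A$-module structure at all.

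Because $L$ is a field, $A$ is central simple over $L$ and has, up to isomorphism, a unique simple left module $U$; thus the existence of an $A$-module structure on $E$ is equivalent to the divisibility $\dim_F U \mid \dim_F E$. To match this with the stated condition, I would use the identification $A \simeq \Mat_n(B \otimes_F \Delta^{\opp})$ together with the fact that capacity and index are preserved under taking opposites, so that $c(A) = n \cdot c$ and $i(A) = i(\Delta \otimes_F B^{\opp})$. Combined with $\deg(A) = \deg(B) \cdot \deg(E) = \deg(B) \cdot n \cdot i(E)$ and the standard formulas $\dim_L U = c(A) \cdot i(A)^2$, $\dim_F E = n^2 \cdot i(E)^2 \cdot [Z:F]$, and $[L:F] = [K:F] \cdot [Z:F]$, a routine arithmetic reduces $\dim_F U \mid \dim_F E$ to $[B:F] \mid n \cdot c$.

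The main obstacle is the Skolem--Noether step: the existence of a bare $A$-module structure on $E$ does not obviously yield one with $E^{\opp}$ acting in the prescribed way, but the inner automorphism produced by Skolem--Noether resolves this. The remaining ingredients---the Wedderburn decomposition of $A$ and bookkeeping of degrees, indices, and capacities---are standard.
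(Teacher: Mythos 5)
Your argument is correct, and it reaches the criterion by a Morita-inflated variant of the paper's proof rather than the same computation. The paper writes $E=\End_\Delta(V)$ with $V=\Delta^n$ and observes that an embedding $B\hookrightarrow E$ amounts to a right $\Delta\otimes_F B^{\rm o}$-module structure on $V$ (compatibility with the given $\Delta$-action being automatic because right $\Delta$-modules are classified by their $F$-dimension), after which the divisibility $[B:F]\mid nc$ falls out of a one-line count using $[B:F][\Delta:F]=c^2[\Delta':F]$ and $\dim_F V=n[\Delta:F]$. You instead let the full algebra $A=B\otimes_F E^{\rm o}\simeq \Mat_n(B\otimes_F\Delta^{\rm o})$ act on the regular module $E$, and you invoke Skolem--Noether to renormalize an arbitrary $A$-module structure on $E$ so that $E^{\rm o}$ acts by right translations, recovering the embedding from the double centralizer of right multiplications. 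Your bookkeeping ($c(A)=nc$, $i(A)=\deg(B)\,i(E)/c$, $\dim_L U=c(A)i(A)^2$, $\dim_F E=n^2 i(E)^2[Z:F]$) is heavier but closes correctly: one finds $\dim_F E/\dim_F U=nc/[B:F]$ exactly, so the divisibility is precisely (\ref{eq:11}). What your route buys is that the normalization issue --- why a bare module structure yields an embedding compatible with the ambient structure --- is made explicit via Skolem--Noether, a point the paper's ``if and only if'' leaves implicit (it silently uses that $\Delta$-modules of equal dimension are isomorphic); what the paper's choice of $V$ buys is lighter arithmetic and no need for Skolem--Noether. Both arguments hinge on the hypothesis that $L=Z\otimes_F K$ is a field, so that the relevant tensor algebra is simple with a unique simple module.
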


\begin{proof}
  This is a special case of \cite[Theorem 1.2]{yu:embed}. However, 
  instead of referring to the general result, 
  we prefer to give a direct proof for the
  reader's convenience. 
  Let $E=\End_{\Delta}(V)$, where $V$ is a right vector space over
  $\Delta$. An $F$-algebra embedding from
  $B$ into $E$ exists if and only if $V$ is a $(B,\Delta)$-bimodule,
  or equivalently
  a right $\Delta\otimes_F B^{\rm o}$-module. Let $\Delta\otimes_F
  B^{\rm o} \simeq \Mat_c(\Delta')$, where $\Delta'$ is the
  division part of the simple algebra $\Delta\otimes_F B^{\rm o}$.
  By the dimension
  counting, the vector space $V$ is a $\Mat_c(\Delta')$-module 
  if and only if  
  \begin{equation}
    \label{eq:22}
    \frac{\dim_F V}{c  [\Delta':F]}\in \bbN.
  \end{equation}
  Note that
  $[B:F][\Delta:F]=c^2[\Delta':F]$. 
  From this relation and that $\dim_F V=n[\Delta:F]$,  
  the condition (\ref{eq:22}) can be written as $[B:F]\mid nc$. 
  This proves the proposition. \qed 
\end{proof}

\begin{remark}\label{23}
  The reader can find in \cite{yu:embed} for more general results about  
  Proposition~\ref{22} where $B$ and $E$ are any finite-dimensional
  semi-simple $F$-algebras. 
  When $F$ is a global field, the local-global principle enters and
  plays a role in the problem of embeddings of simple algebras. For
  a detailed discussion, the reader is referred to 
  the paper \cite{shih-yang-yu}.

 
\end{remark}

After establishing a basic embedding result (Proposition~\ref{22}), 
we begin with the
classification of endomorphism algebras of QM abelian surfaces. 
Let $(A,\iota)$ be an abelian surface with quaternion multiplication
by $D$ and let $E:=\End^0(A):=\End(A)\otimes_{\Z} \Q$ 
be the endomorphism algebra of $A$. \\ 

\subsection{Case where $A$ is not simple.}
\label{sec:22}
In this case $A$ is isogenous to $C_1\times C_2$, 
where $C_1$ and $C_2$ are elliptic curves. Then $C_1$ is
isogenous to $C_2$. If not, then we have 
inclusions $D \subset \End^0(C_i$) for $i=1,2$ and each $C_i$ must be
supersingular. It follows that $D\simeq \End^0(C_i)\simeq
D_{p,\infty}$, the definite quaternion algebra over $\Q$ 
ramified exactly at $\{p,\infty\}$, contradiction. 

Therefore, the algebra $E=\End^0(A)=\End(A)\otimes \Q$ is isomorphic to
one the following:
\begin{itemize}
\item[(i)] $\Mat_2(\Q)$,
\item [(ii)] $\Mat_2(K)$, where $K$ is an imaginary quadratic field, 
\item [(iii)] $\Mat_2(D_{p,\infty})$. 
\end{itemize}

The case (i) can not occur because $D$ and $\Mat_2(\Q)$ are 
different quaternion algebras. 

The case (ii) can appear if and
only if $K$ splits $D$. Indeed, as one has an embedding of $D$ in
$\Mat_2(K)$, the algebra 
$D$ acts on a $K$-vector space $V$ of
dimension two. We can identity $V$ with $D$ as $V$ is one-dimensional
$D$-vector space. This makes $D$ a $K$-vector
space of dimension $2$. 
Therefore, $K$ is isomorphic to a (necessarily) maximal 
subfield of $D$. This is exactly when $K$ splits $D$. 
On the other hand, 
any imaginary quadratic field is isomorphic to 
the endomorphism algebra of an 
elliptic curve. Therefore, for any imaginary quadratic field that
splits $D$, the matrix algebra 
$\Mat_2(K)$ can occur as the  endomorphism algebra of a QM
abelian surface.  

For the case (iii), this occurs of course only when $A$ is in
\ch $p>0$ and $A$ is isogenous to the product of two supersingular
elliptic curves over the base field $k$ containing $\F_{p^2}$. 
Now we check that an embedding 
$\iota:D\to \Mat_2(D_{p,\infty})$ exists for any prime $p$. 
By Proposition~\ref{22}, we
need to show that $[D:\Q] \mid 2 c$, where $c$ is the capacity of the
central simple algebra $D^{\rm o}\otimes_\Q D_{p,\infty}$ over $\Q$ 
and $D^{\rm o}$ denotes the opposite algebra of $D$. 
As the tensor product of two quaternion algebras is Brauer equivalent
to a quaternion algebra, we have
\[ D^{\rm o}\otimes_\Q D_{p,\infty}\simeq \Mat_2(D') \] 
for some definite quaternion algebra $D'$ over $\Q$. This shows that
$c=2$ and hence that a $\Q$-algebra embedding 
$\iota: D\to \Mat_2(D_{p, \infty})$ exists. 
We have shown the following result.

\begin{prop}\label{24}
  Let $D$ be as above and $A$ be an abelian surface with
  endomorphism algebra $E:=\End^0(A)$ containing $D$. 
  Suppose that $A$ is not simple. Then
  $A$ is isogenous to $C^2$ for an elliptic curve $C$ 
  and the algebra $E$ is
  isomorphic to one of the following two cases 
  \begin{itemize}
  \item [(i)] $\Mat_2(K)$, where $K$ is any imaginary quadratic field
    which splits $D$,
    or
  \item [(ii)] $\Mat_2(D_{p,\infty})$. This occurs if and only 
    if $C$ is a supersingular elliptic curve in \ch $p>0$ and the base
    field $k$ contains $\F_{p^2}$. 
  \end{itemize}
\end{prop}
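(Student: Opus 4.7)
The plan is to reduce the statement to an enumeration of the possibilities for $\End^0(C)$ where $C$ is an elliptic curve isogenous to a factor of $A$, and then to apply Proposition~\ref{22} to detect when an embedding $\iota: D\hookrightarrow \Mat_2(\End^0(C))$ exists.

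First I would write $A\sim C_1\times C_2$ for elliptic curves $C_1,C_2$ and argue that they must be isogenous. If they were not, then $\End^0(A)\cong \End^0(C_1)\times \End^0(C_2)$, and since $D$ is simple it must project isomorphically into one factor, yielding an embedding $D\hookrightarrow \End^0(C_i)$. As $[D:\Q]=4$, this would force $C_i$ to be supersingular with $\End^0(C_i)\cong D_{p,\infty}$ for some prime $p$, contradicting the fact that $D$ is indefinite while $D_{p,\infty}$ is definite. Hence $A\sim C^2$ and $E\cong \Mat_2(F)$, where $F:=\End^0(C)$ is one of $\Q$, an imaginary quadratic field $K$, or a definite quaternion algebra $D_{p,\infty}$.

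Next I would rule out $F=\Q$ immediately, since $\Mat_2(\Q)$ contains no copy of the division algebra $D$. For $F=K$, I would observe that any embedding $D\hookrightarrow \Mat_2(K)$ makes $K^2$ into a right $D$-module, necessarily free of rank one, so $K$ embeds as a maximal subfield of $D$; conversely, if $K$ splits $D$, then $D\otimes_\Q K\cong \Mat_2(K)$ gives the embedding. For $F=D_{p,\infty}$, I would apply Proposition~\ref{22} with $B=D$ and $E=\Mat_2(D_{p,\infty})$: here $n=2$, $\Delta=D_{p,\infty}$, and since $D^{\opp}\otimes_\Q D_{p,\infty}$ is a $16$-dimensional central simple $\Q$-algebra Brauer equivalent to a quaternion algebra, it is isomorphic to $\Mat_2(D')$ for some quaternion algebra $D'/\Q$, giving capacity $c=2$. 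The condition $[D:\Q]=4\mid n\cdot c=4$ holds, so $\iota$ exists for every $p$. Finally I would invoke the standard fact that a supersingular elliptic curve has endomorphism algebra $D_{p,\infty}$ over $k$ exactly when $k\supset \F_{p^2}$, which yields the "if and only if" in (ii).

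I do not foresee serious obstacles: the only delicate point is getting the capacity right in the $D_{p,\infty}$ case, which relies on the Brauer-equivalence fact that a tensor product of two quaternion algebras over $\Q$ is a $2\times 2$ matrix algebra over some quaternion algebra. Once Proposition~\ref{22} is applied, the classification in (i) and (ii) is automatic, and the "if and only if" in (ii) is then a consequence of the structure of endomorphism rings of supersingular elliptic curves over finite fields.
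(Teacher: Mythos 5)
Your proposal is correct and follows essentially the same route as the paper: the same reduction to $A\sim C^2$ via the indefinite/definite contradiction, the same case split according to $\End^0(C)\in\{\Q, K, D_{p,\infty}\}$, the same maximal-subfield argument for $\Mat_2(K)$, and the same application of Proposition~\ref{22} with the Brauer-equivalence computation giving capacity $c=2$ for $D^{\rm o}\otimes_\Q D_{p,\infty}$. The only (immaterial) differences are that you use just one factor in the non-isogenous case where the paper embeds $D$ into both, and the paper additionally notes that every imaginary quadratic field splitting $D$ is realized by a CM elliptic curve to justify the word ``any'' in case (i).
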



\subsection{Case where $A$ is simple.}
\label{sec:23}

Since $E:=\End^0(A)$ 
contains the quaternion algebra $D$, the algebra $E$ is
non-commutative. Let $K$ be the center of $E$. Since $\dim A=2$, any
maximal subfield of $E$ has degree $2$ or $4$ (over $\Q$). So one has
$[K:\Q]|4$. If
$[K:\Q]=4$, then $E=K$ (which is commutative), absurd. 
So $K=\Q$ or $[K:\Q]=2$. 

If $K=\Q$,
which is totally real,
then $E$ is a quaternion algebra over $\Q$. This follows from 
Albert's classification of central division algebras with positive
involution (cf. Mumford \cite[Section 21]{mumford:av}).  
In this case, one must have $E\simeq D$. 

Suppose now that $[K:\Q]=2$. Then $E$
is a quaternion division algebra over $K$. If $K$ is real, then
$E\supset D\otimes_\Q K$
contains a totally real maximal subfield $K'$ (of degree $4$ over $\Q$), 
which shows that $\dim A$ is divisible by $4$ (see Mumford
\cite[Corollary, p.~191]{mumford:av}), 
absurd. It follows that the center $K$ is an
imaginary quadratic field. Note that in this case, 
$A$ is in \ch $p>0$ for some prime $p$.
Indeed, its endomorphism algebra contains a $4$-dimensional CM subfield
We also know that 
any simple CM abelian variety by a CM field $L$ in \ch zero has
endomorphism algebra equal to $L$ but $E$ is non-commutative. 
Therefore, $A$ is in positive characteristic. 
Now we determine which
quaternion division algebra $E$ over an imaginary quadratic field $K$
contains a subalgebra isomorphic to $D$. 
This is exactly when the capacity of $E\otimes_\Q D^{\rm o}$ is equal
to $4$ by Proposition~\ref{22}, or equivalently, 
the quaternion algebra $D_K:=D\otimes_\Q K$ is 
isomorphic to $E$. We have shown the following result.

\begin{prop}\label{25}
  Let $D$ be as above and $A$ be an abelian surface with quaternion
  multiplication by $D$.
  Suppose that $A$ is simple. Then
  \begin{itemize}
  \item[(i)] $E\simeq D$, or
  \item[(ii)] $E\simeq D_K:=D\otimes_\Q K$ 
    for some imaginary quadratic field $K$. In
    this case, the abelian surface $A$ is in \ch $p>0$ for some prime
    $p$. 
  \end{itemize}
\end{prop}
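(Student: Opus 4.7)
The plan is to classify $E:=\End^0(A)$ by working out the constraints imposed by (a) the dimension $\dim A = 2$ together with Albert's theorem, and (b) the fact that $E$ must contain the indefinite quaternion algebra $D$.

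First I would observe that since $E$ contains $D$, it is noncommutative, and since $A$ is simple, $E$ is a division algebra. Let $K$ be the center of $E$. If $[E:K]=d^2$ then any maximal subfield of $E$ has degree $d[K:\Q]$ over $\Q$, and this degree must divide $2\dim A=4$. This restricts $[K:\Q]\in\{1,2,4\}$, and rules out $[K:\Q]=4$ (which would force $d=1$, i.e.\ $E=K$ commutative, contradicting $D\subset E$). So either $K=\Q$ or $K$ is a quadratic field. In the latter case the subfield degree bound forces $d=2$, so $E$ is a quaternion algebra over $K$.

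Second I would dispose of the totally real cases using Albert's classification (Mumford \cite[\S 21]{mumford:av}). If $K=\Q$, Albert's list together with the existence of the embedding $D\hookrightarrow E$ forces $E$ to be a quaternion algebra over $\Q$, hence (by dimension) $E\simeq D$. If $K$ were real quadratic, then $D\otimes_\Q K\subset E$ would contain a totally real maximal subfield of degree $4$ over $\Q$; invoking \cite[Cor., p.~191]{mumford:av}, this would force $4\mid\dim A$, contradicting $\dim A=2$. Hence in the quadratic case $K$ must be imaginary.

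Third, assuming $K$ is imaginary quadratic and $E$ is a quaternion algebra over $K$, I would pin down $E$ using Proposition~\ref{22}. The centers $\Q$ of $D$ and $K$ of $E$ are linearly disjoint over $\Q$ with $L=K$; the division part of $E$ is $E$ itself, so $n=1$. The embedding criterion $[D:\Q]\mid n\cdot c$ becomes $4\mid c$, where $c$ is the capacity of the $16$-dimensional central simple $K$-algebra $E\otimes_\Q D^{\mathrm{o}}\simeq E\otimes_K D_K^{\mathrm{o}}$. This forces $c=4$, i.e.\ this algebra is $\Mat_4(K)$, which in the quaternion Brauer group over $K$ is equivalent to $E\simeq D_K=D\otimes_\Q K$. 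Finally, to establish the characteristic claim, I would note that $E$ contains a degree-$4$ subfield which is CM (a maximal subfield of a quaternion algebra over the imaginary quadratic $K$), so $A$ is a CM abelian surface; but in characteristic $0$ a simple CM abelian variety has commutative endomorphism algebra equal to its CM field, contradicting noncommutativity of $E$. Therefore $A$ lies in characteristic $p>0$.

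The main obstacle I expect is the real-quadratic case: one must be careful to cite the correct version of Mumford's bound on totally real subfields in arbitrary characteristic, since the naive statement is often quoted only in characteristic zero. The remaining steps are essentially bookkeeping with Albert's classification and a single application of Proposition~\ref{22}.
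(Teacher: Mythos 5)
Your proposal is correct and follows essentially the same route as the paper: bound the center via maximal subfield degrees, use Albert's classification and Mumford's corollary (p.~191) to force $K=\Q$ (giving $E\simeq D$) or $K$ imaginary quadratic, apply Proposition~\ref{22} with $n=1$ to get capacity $4$ and hence $E\simeq D_K$, and rule out characteristic zero by the smCM/commutativity argument. No substantive differences from the paper's proof.
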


Theorem~\ref{11} follows from Propositions~\ref{24} and \ref{25}.

The case (i) of Proposition~\ref{25} 
occurs as one can take $A$ to be a generic complex
QM abelian surface. For the case (ii), we make a further discussion 
about the algebras of the form $D_K$ that can occur in the next subsection.

\subsection{}
\label{sec:24}
Put $D_K:=D\otimes_\Q K$, where $K$ is an imaginary quadratic field.
In the remaining of this section, we investigate which $D_K$ can be
realized as the endomorphism algebra of an abelian surface.   

Suppose $D_K\simeq \End^0(A)$ for an abelian surface. Then $A$ has smCM
(sufficiently many complex multiplications, that is, the endomorphism
algebra $\End^0(A)$ of $A$ contains a semi-simple commutative
$\Q$-subalgebra $L$ with $[L:\Q]=2\dim A$). By a
theorem of Grothendieck \cite[Section 22, p.~220]{mumford:av}, 
there are a finite field extension $k'$ of the
ground field $k$ and an abelian surface $A_0$ over a finite field
$k_0$ contained in $k'$ such that there is an isogeny $A\otimes_k
k'\to A_0\otimes_{k_0} k'$ over $k'$ 
(see \cite{oort:cm} for Grothendieck's
original proof and \cite{yu:cm} for a different proof). 
We may enlarge $k_0$ in $k'$, 
if necessary, such that 
$\End^0(A_0\otimes_{k_0} k')=\End^0_{k_0}(A_0)=:E_0$. 
This shows the following:

\begin{lemma}
  Notations being as above, the algebra $E\simeq D_K$ is 
  contained in the endomorphism algebra $E_0$ of an abelian surface 
  over a finite field. 
\end{lemma}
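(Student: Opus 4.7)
The plan is to combine two ingredients that are already alluded to in the paragraph preceding the lemma: first verify that $A$ has sufficiently many complex multiplications, and then invoke Grothendieck's descent theorem to pass, up to isogeny and a finite base change, to an abelian surface over a finite field.

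For the smCM check, observe that $D_K$ is a quaternion algebra over the imaginary quadratic field $K$, so $[D_K:\Q]=8$ and any maximal subfield $L\subset D_K$ is a quadratic extension of $K$, giving $[L:\Q]=4=2\dim A$. Transporting via the isomorphism $E\simeq D_K$ produces a commutative semisimple $\Q$-subalgebra of $\End^0(A)$ of dimension $2\dim A$, which is exactly the smCM condition needed to apply Grothendieck's theorem.

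By Grothendieck's theorem (cf.\ \cite[Section~22]{mumford:av}, \cite{oort:cm}, \cite{yu:cm}), there exist a finite extension $k'/k$, a finite subfield $k_0\subset k'$, an abelian surface $A_0$ over $k_0$, and a $k'$-isogeny $\varphi\colon A\otimes_k k'\to A_0\otimes_{k_0}k'$. Since all geometric endomorphisms of an abelian variety over a finite field are defined over some finite extension of that field, we may enlarge $k_0$ inside $k'$ to arrange that $\End^0(A_0\otimes_{k_0}k')=\End^0_{k_0}(A_0)=:E_0$. The isogeny $\varphi$ induces a $\Q$-algebra isomorphism $\End^0(A\otimes_k k')\isoto \End^0(A_0\otimes_{k_0}k')=E_0$, while the natural base-change map provides an inclusion $\End^0(A)\hookrightarrow \End^0(A\otimes_k k')$. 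Composing these gives the desired embedding $E\simeq D_K\hookrightarrow E_0$.

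No serious obstacle is anticipated: the only substantial input is Grothendieck's descent theorem itself, which is invoked as a black box; the smCM verification, the isogeny-invariance of $\End^0$, and the fact that endomorphisms over $\ol{k_0}$ are all defined over a finite extension of $k_0$ are standard and routine.
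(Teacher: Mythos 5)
Your proposal is correct and follows essentially the same route as the paper: verify smCM via a degree-$4$ maximal commutative subalgebra of $D_K$, invoke Grothendieck's theorem to obtain an isogeny $A\otimes_k k'\to A_0\otimes_{k_0}k'$ with $A_0$ over a finite field, enlarge $k_0$ so that $\End^0(A_0\otimes_{k_0}k')=\End^0_{k_0}(A_0)=E_0$, and conclude with the inclusion $E=\End^0(A)\subset\End^0(A\otimes_k k')\simeq E_0$. No discrepancies with the paper's argument.
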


Since $[E:\Q]=8$,
one has either 
\begin{itemize}
\item [{\bf (a)}] $E=E_0$, or
\item [{\bf (b)}] $\dim E_0=16$. 
\end{itemize}

\begin{lemma}\label{26}
Let notations be as above. For either the case {\bf (a)} or {\bf (b)}, 
there is a  rational prime $p$ which splits in $K$. 
Furthermore we have for any finite
place $v$ of $K$ 
\begin{equation}
  \label{eq:23}
  \inv_v(E)=
\begin{cases}
  1/2 & \text{if $v|p$,} \\
  0 & \text{otherwise}. 
\end{cases}
\end{equation}
\end{lemma}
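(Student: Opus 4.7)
The plan is to treat cases \textbf{(a)} and \textbf{(b)} separately and, in each, identify $E$ with an explicit quaternion algebra over $K$ whose local invariants can be read off.

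In case \textbf{(a)}, $E = E_0 = \End^0(A_0)$, and since $E_0 \simeq D_K$ is a division algebra, $A_0$ is simple. Writing $p$ for the characteristic of $k_0$, Tate's theorem identifies $K$ with $\Q(\pi_{A_0})$ and asserts that the local invariants of $E_0$ vanish at every finite place $v \nmid p$ of $K$; since $K$ is imaginary quadratic, there are no real archimedean places either. Because $E_0$ is a non-split quaternion algebra over $K$, its invariants take value $1/2$ at a positive even number of places, all of them above $p$. The only way this can happen for imaginary quadratic $K$ is that $p$ splits in $K$, producing two places $v_1, v_2 \mid p$ with $\inv_{v_i}(E) = 1/2$, and all other invariants vanish.

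In case \textbf{(b)}, $\dim_\Q E_0 = 16$ forces $A_0$ to be supersingular, so $E_0 \simeq \Mat_2(D_{p,\infty})$ with $p$ the characteristic of $k_0$. Because $K$ is the center of $E$ we have $E \subseteq Z_{E_0}(K)$, and the double centralizer theorem applied to $K \subset E_0/\Q$ yields $\dim_\Q Z_{E_0}(K) = 16/[K:\Q] = 8 = \dim_\Q E$, whence $E = Z_{E_0}(K)$. The standard Brauer-group relation $[Z_B(L)] + [B \otimes_F L] = 0$ in $\Br(L)$, valid for any subfield $L$ of a CSA $B/F$, then identifies $[E]$ with $[E_0 \otimes_\Q K] = [D_{p,\infty} \otimes_\Q K]$ in $\Br(K)$; since both $E$ and $D_{p,\infty} \otimes_\Q K$ are quaternion algebras over $K$, we conclude $E \simeq D_{p,\infty} \otimes_\Q K$.

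It then remains to compute the invariants of $D_{p,\infty} \otimes_\Q K$: at finite $v \nmid p$ they vanish; at the archimedean place, the imaginariness of $K$ gives $[K_v : \R] \cdot \tfrac{1}{2} \equiv 0 \pmod 1$; and at $v \mid p$ the invariant equals $[K_v : \Q_p]/2 \pmod 1$, which is $\tfrac{1}{2}$ if $p$ splits in $K$ and $0$ otherwise. Since $E$ is a division algebra, not every invariant vanishes, so $p$ must split in $K$; the invariants then take the claimed form. The main obstacle is the centralizer/Brauer-class identification needed in case \textbf{(b)}; with that in place, everything else reduces to standard local-invariant bookkeeping together with the Honda--Tate description of endomorphism algebras in case \textbf{(a)}.
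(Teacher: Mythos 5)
Your proof is correct, and for case \textbf{(a)} it coincides with the paper's argument: Honda--Tate/Tate give $K=\Q(\pi_0)$ and vanishing invariants at the finite places $v\nmid p$, and since $E$ is a quaternion division algebra over an imaginary quadratic $K$, the invariant $1/2$ must occur at an even, positive number of places, all above $p$, which forces $p$ to split and pins down (\ref{eq:23}). In case \textbf{(b)} you take a genuinely different (and slightly stronger) route: the paper gets $E\simeq D_{p,\infty}\otimes_\Q K$ from its embedding criterion (Proposition~\ref{22}: $D_K$ embeds into $\Mat_2(D_{p,\infty})$ exactly when $D_K\simeq D_{p,\infty}\otimes_\Q K$), whereas you identify $E$ with the centralizer $Z_{E_0}(K)$ via the double centralizer theorem (dimension count $16/[K:\Q]=8=\dim_\Q E$) and then compute its Brauer class by the centralizer theorem. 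This buys you the extra information that $E$ is literally the centralizer of $K$ in $E_0$, not merely abstractly isomorphic to $D_{p,\infty}\otimes_\Q K$, and it keeps the argument self-contained in standard CSA theory rather than re-invoking the capacity criterion. One small slip: the centralizer relation is $[Z_B(L)]=[B\otimes_F L]$ in $\Br(L)$, not $[Z_B(L)]+[B\otimes_F L]=0$; the sign is immaterial here since all classes in play are $2$-torsion, so your identification of $E$ with $D_{p,\infty}\otimes_\Q K$, and hence the invariant computation and the splitting of $p$, go through unchanged.
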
 
\begin{proof}
  For the case {\bf (b)}, the algebra $E_0\simeq \Mat_2(D_{p,\infty})$
  for 
  some rational prime $p$. The
  algebra $D_K$ can be embedded in $E_0$ if and only if
  $D_{p,\infty}\otimes_\Q  K\simeq D_K\simeq E$. Since
  $\inv_\infty(E)=0$ and 
  $E$ is a division algebra, the places with non-trivial invariants
  are those of $K$ over $p$. It follows that there are two places of
  $K$ lying over $p$ at which $E$ has non-trivial local invariant, 
  and the remaining local invariants are trivial. 

  For the case {\bf (a)}, $E$ is the endomorphism algebra of an abelian
  surface $A_0$ over a finite field $k_0$. 
  Using the Honda-Tate theory, the center
  $K$ is $\Q(\pi_0)$, where $\pi_0$ is the relative Frobenius
  endomorphism 
  of $A_0$ over $k_0$. For any finite place $v$ of $K$ with $v \nmid
  p$, one has $\inv_v(E)=0$. As $E$ is a division algebra, it follows
  that there are two places of
  $K$ lying over $p$ at which $E$ has non-trivial local invariant, 
  and the remaining local invariants are trivial. \qed   
\end{proof}

We need to find all rational primes $p$ and imaginary quadratic fields
$K$ such that the quaternion algebra $D_K:=D\otimes_\Q K\simeq E$ 
satisfies the condition of
Lemma~\ref{26} and that the algebra
$E$ appears as the endomorphism algebra of an abelian surface. 
Let $S$ be the discriminant of $D$ over $\Q$; by definition, $S$ is the
product of all finite ramified rational primes for $D$. Clearly, one has
$p\mid S$, otherwise the local invariants of $D_K$ at places $v$ lying
over $p$ are zero and hence that $D_K\simeq \Mat_2(K)$, absurd. 
Therefore, a necessary condition that $K$ satisfies the conditions in
Lemma~\ref{26} is the following: \\

($*$) The prime $p$ splits in $K$ and for any other prime $\ell | S$, the
completion $K_\ell:=K\otimes \Q_\ell$ at $\ell$ is a field. \\

The first condition of ($*$) follows from $\inv_v(E)=1/2$ if $v|p$ and the
second one follows from $\inv_v(E)=0$ otherwise. 

Now given a rational prime $p\mid S$ and an imaginary quadratic field
$K$ satisfying the condition ($*$), we would like to  find a 
Weil $q$-number $\pi$, where $q$
is a power of $p$,  so that
$K\simeq \Q(\pi)$ and for every place $v \mid p$ of $K$, one has 
$v(\pi)/v(q)=1/2$.  

\begin{lemma}\label{27}
  Let $(A,\iota)$ be an abelian surface with QM by $D$ 
over a field $k$
  of \ch $p>0$. Suppose that $p \mid S$, then $A$ is supersingular. 
\end{lemma}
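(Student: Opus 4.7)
The plan is to study the rational Dieudonn\'e isocrystal $N$ of $A$ and to show that the embedded $D$-action forces all slopes of $N$ to be $1/2$. Working after base change to an algebraic closure $\ol k$ of $k$ (which preserves the Newton polygon, and hence the property of being supersingular), let $N$ denote the rational Dieudonn\'e isocrystal of $A_{\ol k}$; it is a four-dimensional module over $L:=W(\ol k)[1/p]$. The embedding $\iota\colon D\hookrightarrow \End^0(A)$ induces a $\Qp$-algebra homomorphism
\[ \iota_p\colon D_p:=D\otimes_\Q \Qp\ \longrightarrow\ \End(N), \]
where $\End$ denotes endomorphisms in the category of isocrystals (i.e., commuting with Frobenius). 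Since $p\mid S$, the local algebra $D_p$ is the quaternion division algebra over $\Qp$.

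The first step is to enumerate the candidate Newton polygons of an abelian surface. The slopes must be symmetric under $\alpha\mapsto 1-\alpha$ and sum to $\dim A=2$, so the only possibilities are the ordinary polygon $(0,0,1,1)$, the mixed polygon $(0,\tfrac{1}{2},\tfrac{1}{2},1)$, and the supersingular polygon $(\tfrac{1}{2},\tfrac{1}{2},\tfrac{1}{2},\tfrac{1}{2})$. My goal is to eliminate the first two.

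The key observation is that the slope decomposition $N=\bigoplus_\alpha N_\alpha$ is preserved by every $F$-commuting endomorphism, in particular by $\iota_p(D_p)$. Since $D_p$ is simple, its projection to $\End(N_\alpha)$ is either zero or injective; but each nonzero $d\in D$ is invertible in $\End^0(A)$ and acts as an automorphism of $N$, hence nontrivially on every nonzero $N_\alpha$. Therefore $D_p$ embeds into $\End(N_\alpha)$ whenever $N_\alpha\ne 0$. By the Dieudonn\'e--Manin classification, the isoclinic component of slope $r/s$ (in lowest terms) and $L$-dimension $sm$ has endomorphism algebra $\Mat_m(\Delta_{-r/s})$, where $\Delta_\beta$ denotes the central division $\Qp$-algebra of invariant $\beta$. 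In the ordinary case $\End(N_0)=\Mat_2(\Qp)$ has the wrong Brauer class to contain the quaternion division algebra $D_p$ (both are four-dimensional over $\Qp$). In the mixed case $\End(N_0)=\End(N_1)=\Qp$, which is too small to contain $D_p$. Both non-supersingular polygons are ruled out, so $A_{\ol k}$ has all slopes equal to $1/2$ and is supersingular.

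The main obstacle is the structural input from Dieudonn\'e--Manin theory that identifies the endomorphism algebra of each isoclinic component as a matrix algebra over the division algebra of the expected invariant; once this is in hand, the contradiction becomes a clean simplicity-of-$D_p$ argument. Equivalently, the embedding criterion of Proposition~\ref{22} could be applied locally at $p$ to each $\End(N_\alpha)$ to reach the same conclusion.
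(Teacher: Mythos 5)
Your proof is correct and is essentially the argument of the paper: the paper embeds $D_p=D\otimes\Qp$ into $\End^0(A[p^\infty])$, lists its three possible shapes according to the Newton polygon (ordinary, $p$-rank one, supersingular), and observes that the quaternion division algebra $D_p$ only fits in the supersingular case. Your version simply makes the paper's list explicit via the slope decomposition of the rational Dieudonn\'e isocrystal and the Dieudonn\'e--Manin classification, which is the same computation.
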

\begin{proof}
  This result is well-known (see \cite{boutot-carayol:sc}); 
  we provide a proof for the reader's
  convenience. We may assume that the ground 
  field $k$ is algebraically closed. We have
  a $\Q_p$-algebra embedding $\iota:D_p:= D\otimes \Q_p\to \End^0(X)$,
  where $X:=A[p^\infty]$ is the $p$-divisible group attached to
  $A$. The possibilities of $\End^0(X)$ are (a) (ordinary)
  $\Mat_2(\Q_p)\times \Mat_2(\Q_p)$, (b) ($p$-rank one) $D_p\times
  \Q_p\times \Q_p$, and (c) (supersingular) $\Mat_2(D_p)$. Clearly,
  only the case (c) is possible. Therefore, the abelian surface $A$ is
  supersingular. \qed    
\end{proof}

We need to find all Weil $q$-numbers $\pi$ so that the corresponding
abelian variety $A_\pi$, uniquely determined up to isogeny, is 
both simple and supersingular, 
and that its center $\Q(\pi)$ is an imaginary quadratic
field satisfying the condition ($*$). The latter condition will
imply that the endomorphism algebra of $A_\pi$ is a quaternion
division algebra over $K$ and hence that 
$A_\pi$ is an abelian surface.

\begin{thm}\label{28}
  Let $q$ be a power of a prime number $p$ and $\pi$ is a Weil
  $q$-number. Then the corresponding abelian variety $A_\pi$ 
  is supersingular if
  and only if $\pi=\sqrt{q}\zeta$, where $\zeta$ is a root of unity. 
\end{thm}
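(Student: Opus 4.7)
The plan is to use the Honda--Tate dictionary to translate supersingularity of the isogeny class into a condition on the $p$--adic valuations of the Weil number $\pi$, and then reduce to Kronecker's theorem on algebraic integers all of whose archimedean conjugates have absolute value one.

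First recall the standard fact (a consequence of Honda--Tate and the Newton--Dieudonn{\'e} theory) that if $\pi$ is a Weil $q$-number with $K=\Q(\pi)$, then the Newton slopes of the simple abelian variety $A_\pi$ are precisely the numbers $v(\pi)/v(q)$ as $v$ ranges over the places of $K$ lying above $p$. Since $A_\pi$ is supersingular if and only if all its Newton slopes are equal to $1/2$, this translates into the statement
\[ v(\pi)=\tfrac{1}{2}v(q)\qquad\text{for every place }v\mid p\text{ of }K. \]
Assuming this, I would set $\beta:=\pi^2/q$. For any place $v\mid p$, the supersingular condition gives $v(\beta)=2v(\pi)-v(q)=0$. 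For any finite place $v\nmid p$, since $\pi$ is an algebraic integer and $N_{K/\Q}(\pi)=\pm q^{[K:\Q]/2}$, the valuation $v(\pi)$ must also vanish, so $v(\beta)=0$. Hence $\beta$ is an algebraic integer (in fact a unit). At the archimedean places, the defining property of a Weil $q$-number gives $|\sigma(\pi)|=\sqrt{q}$ for every embedding $\sigma:K\hookrightarrow\C$, so $|\sigma(\beta)|=1$ for every $\sigma$.

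Kronecker's theorem then forces $\beta$ to be a root of unity, say $\beta=\zeta^2$ for a root of unity $\zeta$ in $\overline{\Q}$. Consequently $\pi^2=q\zeta^2$, so $\pi=\pm\sqrt{q}\,\zeta$; absorbing the sign into the root of unity gives $\pi=\sqrt{q}\,\zeta$, as required. Conversely, if $\pi=\sqrt{q}\,\zeta$ then $\pi^2/q=\zeta^2$ is a unit at every finite place, so $v(\pi)/v(q)=1/2$ for every $v\mid p$, and $A_\pi$ is supersingular by the Newton slope criterion above.

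The only genuine obstacle is citing (rather than reproving) the slope formula $v(\pi)/v(q)$ and the equivalence between this formula being identically $1/2$ and the abelian variety being supersingular; once that is in place the proof is a short application of Kronecker's theorem. Everything else is bookkeeping with valuations and absolute values of $\pi$.
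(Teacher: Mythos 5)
Your proof is correct, but it runs along a different track than the paper's. The paper argues globally with isogenies: for the forward direction it fixes a supersingular elliptic curve $C/\F_p$ with $\pi_C^2=-p$, invokes the fact that any two supersingular abelian varieties become isogenous over a finite extension to get $\pi^N=p^M$, and then reads off $\pi=\sqrt{q}\,\zeta$ from the archimedean normalization; for the converse it notes that $\pi^N=q^{N/2}$ for suitable even $N$ and applies Tate's isogeny theorem to identify $A_\pi$ over $\F_{q^N}$ with a power of a supersingular elliptic curve. You instead work locally at $p$: you quote the Honda--Tate/Dieudonn\'e slope formula (the slopes of $A_\pi$ are the $v(\pi)/v(q)$ for $v\mid p$ in $\Q(\pi)$) together with the criterion that supersingular means all slopes equal $1/2$, and then apply Kronecker's theorem to $\beta=\pi^2/q$, which your valuation bookkeeping shows is an algebraic integer, a unit at every finite place, and of absolute value $1$ at every archimedean place. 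Both routes lean on the same well-known background (the paper cites Manin, Tate and Oort; your ``slopes all $1/2$ implies supersingular'' direction is exactly the Manin--Oort input, playing the role Tate's isogeny theorem plays in the paper), so neither is more elementary in substance; what your version buys is a cleaner conceptual statement --- supersingularity is a purely $p$-adic condition on $\pi$, and Kronecker converts it into the multiplicative statement --- while the paper's version is more self-contained in the forward direction, needing only the easy fact that supersingular abelian varieties are all isogenous after base extension rather than the slope dictionary. One tiny point of hygiene in your write-up: rather than asserting $N_{K/\Q}(\pi)=\pm q^{[K:\Q]/2}$ (which requires a parity remark when $[K:\Q]$ is odd), it is simpler to use $\pi\bar{\pi}=q$, or to note $N_{K/\Q}(\pi)^2=q^{[K:\Q]}$, to see that the only prime below a finite place where $\pi$ can be a non-unit is $p$; this does not affect the validity of the argument.
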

\begin{proof}
  This is a well-known immediate consequence 
  of results due to Manin \cite{manin:thesis}, Tate \cite{tate:ht} and 
  Oort \cite[Theorem 2]{oort:product}, also see \cite{xing:ss1996}. We
  provide a proof for the 
  reader's convenience. Let $C$ be 
  a supersingular elliptic curve over $\Fp$ such that
  $\pi_{C}^2+p=0$, where $\pi_{C}$ is the Frobenius endomorphism
  of $C/\Fp$. Put $A_1=C^g$, where $g=\dim A_\pi$. Since any two
  supersingular abelian varieties are isogenous over a finite
  extension of their ground fields, we have $\pi^N=p^M$ for some
  positive integers $N$ and $M$. It follows that $\pi$ is of the
  form $\sqrt{q} \zeta$, where $\zeta$ is a root of unity. 
  Conversely, suppose that $\pi$ is of this form. Then 
  $\pi^N$, for some even integer $N$, 
  is $q^{N/2}$,  which is a Weil number corresponding 
  to a supersingular elliptic curve. By Tate's isogeny theorem, 
  $A$ is isogenous to the product of copies
  of a supersingular elliptic curve over a finite field. This
  completes the proof of the theorem. \qed   
\end{proof}

We shall call a Weil $q$-number $\pi$ {\it supersingular } if the
corresponding simple abelian variety $A_\pi$ up to isogeny is
supersingular. 

\begin{lemma}\label{29}
  Let $\pi=\sqrt{q}\,\zeta_n$ be a supersingular Weil $q$-number, where
  $q=p^a$, and
  $\zeta_n$ is a primitive $n$-th root of unity. Then the field
  $K=\Q(\pi)$ generated by $\pi$ is an
  imaginary quadratic field if and only if 
\item [(a)] $a$ is even and $n=3,4,6$, or
\item [(b)] $a$ is odd and $n=4$, $n=8$ and $p=2$, or $n=12$ and $p=3$.
\end{lemma}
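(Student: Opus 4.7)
The plan is to split the analysis by the parity of the exponent $a$ in $q = p^a$, since this determines whether $\sqrt{q}$ lies in $\Q$, and then to exploit the identity $\pi^2 = q\zeta_n^2$ to pin down how much of the cyclotomic field $\Q(\zeta_n)$ must already sit inside $\Q(\pi)$.

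When $a$ is even, $\sqrt{q} = p^{a/2} \in \Q$, so $\Q(\pi) = \Q(\zeta_n)$, which has degree $\varphi(n)$ over $\Q$ and is imaginary precisely when $n > 2$. The condition for being imaginary quadratic is therefore $n \in \{3,4,6\}$, yielding part (a).

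When $a$ is odd, $\sqrt{q} = p^{(a-1)/2}\sqrt{p}$, so $\Q(\pi) = \Q(\sqrt{p}\,\zeta_n)$. From $\pi^2 = q\zeta_n^2$ together with $q \in \Q^*$ one gets $\zeta_n^2 \in \Q(\pi)$, hence $\Q(\zeta_{n/\gcd(n,2)}) \subseteq \Q(\pi)$. Requiring $[\Q(\pi):\Q] = 2$ forces $\varphi(n/\gcd(n,2)) \le 2$, which restricts $n$ to the finite list $\{1,2,3\}$ when $n$ is odd and $\{2,4,6,8,12\}$ when $n$ is even. I would then inspect each candidate. The values $n = 1, 2$ give $\pi = \pm\sqrt{q} \in \R$, so $\Q(\pi)$ is real. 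The values $n = 3, 6$ would demand $\sqrt{p} \in \Q(\sqrt{-3})$, impossible since $\sqrt{p}$ is a positive real and the totally real subfield of $\Q(\sqrt{-3})$ is $\Q$. The value $n = 4$ gives $\pi^2 = -q$ and $\Q(\pi) = \Q(\sqrt{-p})$, which is always imaginary quadratic. The value $n = 8$, using $\zeta_8 = (1+i)/\sqrt{2}$, gives $\pi = \sqrt{p/2}\,(1+i)$, which lies in $\Q(i) = \Q(\zeta_4) \subseteq \Q(\pi)$ iff $\sqrt{p/2} \in \Q$, forcing $p = 2$. Finally $n = 12$, using $\zeta_{12} = (\sqrt{3}+i)/2$, gives $\pi = (\sqrt{3p} + \sqrt{p}\,i)/2$, which lies in $\Q(\sqrt{-3}) = \Q(\zeta_6) \subseteq \Q(\pi)$ iff $\sqrt{3p}$ and $\sqrt{p/3}$ are rational, forcing $p = 3$.

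The main obstacle is this last bookkeeping for $n$ even, in particular showing that no other prime $p$ can arise in the cases $n = 8$ and $n = 12$; this ultimately reduces to the rationality of $\sqrt{p/2}$ and $\sqrt{p/3}$, which immediately singles out $p = 2$ and $p = 3$ respectively. Assembling the surviving pairs $(n, p)$ produces exactly the list in part (b).
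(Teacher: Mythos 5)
Your proof is correct and follows essentially the same route as the paper's: split on the parity of $a$, use $\pi^2=q\zeta_n^2$ to force a cyclotomic field of degree at most $2$ inside $\Q(\pi)$ and hence restrict $n$ to $\{1,2,3,4,6,8,12\}$, then settle each case (your listing of $2$ among the odd $n$ is a harmless slip, as $n=2$ is treated anyway). The only divergence is cosmetic: where the paper pins down $p$ for $n=8,12$ and excludes $n=3,6$ by observing that $p$ must ramify in $\Q(\pi)=\Q(\zeta_m)$ while only $2$ (resp.\ $3$) ramifies in $\Q(\zeta_4)$ (resp.\ $\Q(\zeta_3)$), you reach the same conclusion via the explicit radicals $\zeta_8=(1+i)/\sqrt2$, $\zeta_{12}=(\sqrt3+i)/2$ and the fact that the real subfield of an imaginary quadratic field is $\Q$, which forces $\sqrt{p/2}$, resp.\ $\sqrt{3p}$ and $\sqrt{p/3}$, to be rational.
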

\begin{proof}
  (a) One has $\Q(\pi)=\Q(\zeta_n)$, so $[\Q(\pi):\Q]=2$ if and only
  if $n=3,4, 6$. (b) We have $\Q(\pi)=\Q(\sqrt{p}\zeta_n)$. Since
  $\Q(\pi^2)=\Q(\zeta_m)$ has degree one or two, $m=2,3,4$ or $6$ and
  $n=4, 3, 6, 8$ or $12$. The case $n=4$ is good. For the remaining
  cases one must have $\Q(\pi)=\Q(\pi^2)$. Note that $2$ is the
  only ramified prime in $\Q(\zeta_4)$ and $3$ is the only ramified
  prime in $\Q(\zeta_3)=\Q(\zeta_6)$. Since $p$ is ramified in
  $\Q(\pi)$, the only possibilities are $\Q(\sqrt{2}\,\zeta_8)$ and
  $\Q(\sqrt{3} \zeta_n)$ for $n=3,6,12$. It is easy to check that only
  $\Q(\sqrt{2}\,\zeta_8)$ and $\Q(\sqrt{3}\, \zeta_{12})$ are quadratic
  fields. \qed

\end{proof}

Note that in the case (a) the prime $p$ splits in $\Q(\pi)$ if and
only if $p\equiv 1 \pmod n$. In the case (b) $p$ is ramified in
$\Q(\pi)$. Therefore, if one requires the field $\Q(\pi)$ 
satisfy the condition ($*$), then only the case (a) can occur. 
This yields the following conclusion.

\begin{thm}\label{210} 
  Let $A$ be a simple supersingular
  abelian surface over a finite field $\F_q$ of \ch $p>0$ with
  quaternion multiplication by $D$ and let $E:=\End^0(A)$. Then  
  \begin{itemize}
  \item [(1)] The center $K$ of $E$ is isomorphic 
    to $\Q(\zeta_n)$ for $n=3, 4$, or $6$.
  \item [(2)] One has $p\mid S$ and $p \equiv 1 \pmod n$, where $n$ is as
    above, and for any other prime $\ell \mid S$, one has either
    $\ell | n$ or $\ell \equiv -1 \pmod n$.
  \item [(3)] $E\simeq D\otimes_\Q K$.   
  \end{itemize}  
\end{thm}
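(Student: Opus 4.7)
The plan is to combine Proposition~\ref{25}, Lemma~\ref{26}, Theorem~\ref{28}, and Lemma~\ref{29}, pinning down the possibilities by the local invariant computation. Since $A$ is simple and the algebra $E$ must contain the non-commutative algebra $D$, Proposition~\ref{25} and Albert's classification rule out $E\simeq D$ in the present setting (over a finite field, supersingular with $\dim A=2$ forces $[K:\Q]=2$), so we are in case (ii): $E\simeq D_K$ with $K$ imaginary quadratic. Since $A$ is defined over $\F_q$, Honda--Tate theory gives $K=\Q(\pi)$ where $\pi$ is the relative Frobenius. Because $A$ is supersingular, Theorem~\ref{28} writes $\pi=\sqrt{q}\,\zeta_n$ for some root of unity $\zeta_n$, and Lemma~\ref{29} lists the two possible families (a) and (b) that make $\Q(\pi)$ imaginary quadratic.

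For statement (1), I would rule out case (b) of Lemma~\ref{29} by invoking the splitting condition on $p$. In case (b), $p$ is ramified in $K$, but Lemma~\ref{26} says that $p$ must split in $K$ (because $E$ has invariant $1/2$ at two distinct places above $p$). Only case (a) survives: $n\in\{3,4,6\}$ and $K=\Q(\zeta_n)$.

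For statement (2), first observe that $p\mid S$: otherwise $D_p\simeq\Mat_2(\Q_p)$, so $D_K$ is split at every place above $p$, contradicting $\inv_v(E)=1/2$ at $v\mid p$ from Lemma~\ref{26}. The congruence $p\equiv 1\pmod n$ is simply the condition that $p$ split in $\Q(\zeta_n)$, again from Lemma~\ref{26}. For any other prime $\ell\mid S$ and any place $v$ of $K$ above $\ell$, one has
\[
\inv_v(E)=\inv_v(D\otimes_\Q K)=[K_v:\Q_\ell]\cdot\inv_\ell(D)\equiv\tfrac{[K_v:\Q_\ell]}{2}\pmod{\Z},
\]
so the requirement $\inv_v(E)=0$ from Lemma~\ref{26} forces $[K_v:\Q_\ell]=2$, i.e.\ $\ell$ does not split in $K$. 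For $K=\Q(\zeta_n)$ with $n\in\{3,4,6\}$ the group $(\Z/n\Z)^\times$ has order $2$, so the non-split primes are precisely those with $\ell\mid n$ (ramified) or $\ell\equiv -1\pmod n$ (inert). Statement (3) has already been recorded in Proposition~\ref{25}.

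The only nontrivial step is excluding the exotic cases $\Q(\sqrt 2\,\zeta_8)$ and $\Q(\sqrt 3\,\zeta_{12})$ of Lemma~\ref{29}(b); the above splitting argument handles them uniformly. Everything else is bookkeeping about local invariants of quaternion algebras over quadratic fields and the factorization of primes in $\Q(\zeta_n)$, so I do not anticipate a serious obstacle.
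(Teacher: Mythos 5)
Your proposal is correct and follows essentially the same route as the paper: reduce to $E\simeq D_K$ via Proposition~\ref{25}, use Honda--Tate and Theorem~\ref{28} to write $\pi=\sqrt{q}\,\zeta_n$, apply Lemma~\ref{29}, and eliminate case (b) because $p$ is ramified there while Lemma~\ref{26} (the condition ($*$)) forces $p$ to split, with the remaining congruence conditions on $p$ and $\ell\mid S$ being exactly the paper's interpretation of ($*$) in $\Q(\zeta_n)$. Your explicit local-invariant computation $\inv_v(D\otimes_\Q K)=[K_v:\Q_\ell]\cdot\inv_\ell(D)$ merely spells out what the paper leaves implicit, so there is no substantive difference.
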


It follows from Theorem~\ref{210} that there are three possibilities
for 
endomorphism algebras $E$ of simple supersingular abelian surfaces
over finite fields: $E\simeq D\otimes_\Q \Q(\zeta_n)$ for $n=3,4, 6$. 
Furthermore, the algebra $D\otimes_\Q \Q(\zeta_n)$ occurs if and only
if there is 
exactly one prime $p\mid S$ such that $p\equiv 1 \pmod n$. This prime
is the \ch of the ground field of the abelian surface.

We conclude this section with a remark and a question we think
interesting. Let $E$ be the endomorphism algebra of a
simple QM abelian surface $A$ such that $E\neq D$. Then
\begin{itemize}
\item $A$ is in \ch $p>0$ for a prime $p$, and $A$ is supersingular, 
\item $E=D\otimes_\Q K$, where $K$ is an imaginary quadratic field
  satisfying the condition ($*$) after Lemma~\ref{26}. 
\item In case that $E$ is isomorphic to the endomorphism algebra of
  supersingular simple QM abelian surface over a finite field, 
  we know all such $K$ that can occur by Theorem~\ref{210}.  
\end{itemize}

However, we are not able to rule out the possibility that $E$ is not
isomorphic to a (necessarily supersingular) simple QM abelian surface
over {\it a finite field}. That is, whether or not the endomorphism
algebra of any supersingular abelian surface over an 
{\it arbitrary field} $k$ is isomorphic to 
that of one over a finite field.
We make the following hypothesis.  \\ 

\npr {\bf (H)} Let $k$, $k'$ and $k_0$ be three fields with the
inclusion relation $k\subset k'\supset k_0$. 
Let $A/k$ and $A_0/k_0$ be two abelian varieties such that there
is an isogeny $\varphi: A\otimes_k k' \to A_0 \otimes_{k_0}
k'$ over $k'$.   
Suppose that $\End^0(A_0)=\End^0(A_0\otimes_{k_0} k')$. We
identify $\End^0(A \otimes_k k')=\End^0(A_0)$ using the isogeny 
$\varphi$. Then there an abelian variety $A_1/k_1$ over a subfield
$k_1\subset k_0$ and an isogeny 
$\varphi_1: A_1\otimes_{k_1} k_0 \to A_0$ over
$k_0$ so that the subalgebra $\End^0(A_1)\subset \End^0(A_0)$ (through
$\varphi_1$) is
equal to the subalgebra 
$\End^0(A)$ in  $\End^0(A \otimes_{k} k')=\End^0(A_0)$.      \\

The hypothesis {\bf (H)} rules out the
possibility of imaginary quadratic fields $K$ satisfying the necessary
condition ($*$) that are not of the shape described in
Theorem~\ref{210}.
Then Theorems~\ref{11} and \ref{12} give a complete result for
endomorphism algebras of QM abelian surfaces over an arbitrary base
field. Besides, we also obtain the following result, which is 
an immediate consequence of a theorem of
Grothendieck (cf. \cite{oort:cm}, \cite{yu:cm}). 

\begin{cor}\label{211}
  Let $A/k$ be an abelian variety that has smCM 
  over a field $k$ of \ch $p>0$. 
  Assume the 
  hypothesis {\bf (H)}. Then the endomorphism
  algebra $\End^0(A)$ of $A/k$ is isomorphic to that of an abelian
  variety over  a finite field. 
\end{cor}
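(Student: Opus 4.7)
The plan is to combine Grothendieck's theorem on smCM abelian varieties in positive characteristic with the stated hypothesis \textbf{(H)}, and then to observe that any subfield of a finite field is automatically finite.

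First, I would invoke Grothendieck's theorem (as cited in the excerpt, see \cite{oort:cm}, \cite{yu:cm}): since $A/k$ has smCM and $k$ has characteristic $p>0$, there exist a finite extension $k'/k$, a finite field $k_0 \subset k'$, and an abelian variety $A_0/k_0$, together with an isogeny $\varphi: A\otimes_k k' \to A_0\otimes_{k_0} k'$ defined over $k'$. After enlarging $k_0$ inside $k'$ if necessary (still keeping $k_0$ finite), we may assume that all endomorphisms of $A_0 \otimes_{k_0} k'$ are already defined over $k_0$, that is, $\End^0(A_0)=\End^0(A_0\otimes_{k_0} k')$. Using $\varphi$, identify $\End^0(A\otimes_k k')$ with $\End^0(A_0)$, so that $\End^0(A)$ becomes a $\Q$-subalgebra of $\End^0(A_0)$.

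Now I apply hypothesis \textbf{(H)} to the triple $(k,k',k_0)$, the abelian varieties $A/k$ and $A_0/k_0$, and the isogeny $\varphi$. The hypothesis furnishes a subfield $k_1\subset k_0$, an abelian variety $A_1/k_1$, and an isogeny $\varphi_1: A_1\otimes_{k_1} k_0 \to A_0$ over $k_0$, such that the inclusion $\End^0(A_1)\hookrightarrow \End^0(A_0)$ induced by $\varphi_1$ coincides with the inclusion $\End^0(A)\hookrightarrow \End^0(A_0)$ obtained above. In particular, $\End^0(A_1)\simeq \End^0(A)$ as $\Q$-algebras.

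Finally, since $k_0$ is a finite field, every subfield $k_1\subset k_0$ is itself finite. Therefore $A_1/k_1$ is an abelian variety over a finite field whose endomorphism algebra is isomorphic to $\End^0(A)$, completing the proof. The only substantive input is Grothendieck's theorem; hypothesis \textbf{(H)} does exactly the descent of the subalgebra $\End^0(A)$ from $\End^0(A_0)$ down to a model over a subfield, and the rest is bookkeeping. The one point to be slightly careful about is the enlargement of $k_0$ in the first step: it must be done before applying \textbf{(H)} so that the equality $\End^0(A_0)=\End^0(A_0\otimes_{k_0}k')$ required by \textbf{(H)} is in force.
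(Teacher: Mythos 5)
Your proposal is correct and is exactly the argument the paper intends: the paper presents Corollary~\ref{211} as an immediate consequence of Grothendieck's theorem combined with hypothesis \textbf{(H)}, precisely via the chain $k\subset k'\supset k_0$, the enlargement of $k_0$ so that $\End^0(A_0)=\End^0(A_0\otimes_{k_0}k')$, and the descent of $\End^0(A)$ to $\End^0(A_1)$ over the (necessarily finite) subfield $k_1\subset k_0$. Your remark about performing the enlargement of $k_0$ before invoking \textbf{(H)} is the right point of care and matches the paper's own setup in Section~\ref{sec:24}.
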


Even when the hypothesis {\bf (H)} fails, one is still
in an interesting situation.
This means there are some subtle issues about the fields of definition
that we were not aware of. For example, there are endomorphism
algebras of abelian
varieties having smCM in positive \ch that can not be found by the
Honda-Tate theory. These would contribute new examples to the problem
of endomorphism algebras of abelian varieties studied in Oort 
\cite{oort:endo}.      
 

The following question should be helpful to understand the problem of
fields of definition arising from {\bf (H)}. Recall that an abelian
variety over a field $k$ of \ch $p>0$ is said to be {\it superspecial}
if it is isomorphic to a product of supersingular elliptic curves over
an algebraic closure of $k$. \\

\npr {\bf (Q)}. Let $A$ be a superspecial abelian variety over a field
$k$ of \ch $p>0$. Is there a superspecial abelian variety $A_0$ over a
finite field $k_0$ so that $A$ is isomorphic to $A_0\otimes_{k_0} k$
over $k$?   

\section{Endomorphism algebras of RM abelian varieties}
\label{sec:03}

In this section, we give an exposition on endomorphism algebras of
abelian varieties with real multiplication. Our reference is Chai
\cite{chai:ho}, especially Section 3 of it. 
The classification has its own interest; this is 
also useful in the proof of Chai's theorem on the density 
of ordinary Hecke orbits in Siegel modular varieties.
We change the notations a bit. 
Let $F$ be a totally real number field of degree $g=[F:\Q]$, and
let $O_F$ be the ring of integers. {\it An abelian variety with real
multiplication by $O_F$} is a pair $(A,\iota)$, where $A$ is a
$g$-dimensional abelian variety and $\iota: O_F\to \End(A)$ is a ring
monomorphism. As we are only concerned with the endomorphism algebra
$\End^0(A)$ of such objects, we may replace 
the ring monomorphism 
$\iota: O_F\to \End(A)$ by its induced $\Q$-algebra embedding 
$\iota: F\to \End^0(A)$. We shall call the latter object $(A,\iota)$ 
{\it an abelian variety with RM by $F$}.  
 
Let $(A,\iota)$ be an abelian variety with RM by $F$ over an
(unspecified) base field $k$.

\begin{lemma}
  The underlying abelian variety $A$ is isogenous to $A_1^n$, where $A_1$
  is a simple abelian variety.
\end{lemma}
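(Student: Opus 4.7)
The plan is to show $A$ cannot have two non-isogenous simple factors. Writing the isogeny decomposition $A \sim \prod_{j=1}^{r} A_j^{n_j}$ with the $A_j$ pairwise non-isogenous simples, we have $\End^0(A) \cong \prod_j \Mat_{n_j}(D_j)$ where $D_j = \End^0(A_j)$. For each $j$, the composition $F \hookrightarrow \End^0(A) \to \Mat_{n_j}(D_j)$ with the $j$-th projection is a unital ring map from the field $F$, hence injective; so $F \hookrightarrow \End^0(A_j^{n_j})$ for every $j$.

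Next I would apply the standard bound $[F:\Q] \le 2 \dim B$ for any number field $F$ embedded in $\End^0(B)$, which follows because $V_\ell(B)$ for $\ell \ne \mathrm{char}\,k$ is a faithful $F \otimes \Q_\ell$-module of $\Q_\ell$-dimension $2\dim B$. With $B = A_j^{n_j}$ and $[F:\Q] = g = \dim A$ this gives $g \le 2 n_j \dim A_j$; summing over $j$ yields $rg \le 2\dim A = 2g$, so $r \le 2$.

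The case $r = 1$ is exactly the conclusion of the lemma, so the main obstacle is to exclude $r = 2$. There, equality forces $\dim A_j^{n_j} = g/2$ and $F$ (totally real of degree $g$) acts faithfully on an abelian variety of dimension $g/2$. In characteristic zero this is ruled out by Hodge theory: $H_1(A_j^{n_j}, \Q)$ would be a faithful $F$-module of $\Q$-dimension $g$, hence free of rank one over $F$, so $H_1(A_j^{n_j}, \R) \cong F \otimes \R \cong \prod_{\sigma \colon F \hookrightarrow \R} \R_\sigma$ splits into one-dimensional real eigenspaces; the complex structure on $H_1(A_j^{n_j}, \R)$ must commute with the $F \otimes \R$-action and hence preserve each $\R_\sigma$, but a one-dimensional real vector space carries no complex structure. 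In positive characteristic one can either invoke the classical existence of an $F$-equivariant polarization on an RM abelian variety---after which the Weil pairing on $V_\ell(A_j^{n_j})$ for $\ell \ne 2, \mathrm{char}\,k$ is a nondegenerate $F$-bilinear alternating form on a rank-one $F \otimes \Q_\ell$-module, which must vanish since any $F$-bilinear form on a rank-one module is automatically symmetric---or reduce to the case of a finite base field via Grothendieck's theorem (as already used in Section~\ref{sec:02}) and exclude the configuration via the Honda--Tate classification.
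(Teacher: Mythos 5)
Your proposal is correct, but it takes a genuinely different and longer route than the paper. The paper's proof is a one-liner: since $F$ embeds into $\End^0(A_j^{n_j})$ for every $j$, the standard divisibility for totally real fields (a totally real field of degree $e$ inside $\End^0(B)$ forces $e \mid \dim B$, in every characteristic --- the same Mumford corollary the paper invokes in Section~\ref{sec:23}) gives $g \mid \dim A_j^{n_j}$ for each $j$, and since these dimensions sum to $g=\dim A$ this forces $r=1$ immediately. You instead use only the weak bound $[F:\Q]\le 2\dim B$ coming from the Tate module, which leaves the borderline case $r=2$ to be excluded, and you then in effect reprove by hand the extreme case of the strong divisibility (a totally real field of degree $2\dim B$ cannot act): via the eigenspace/complex-structure argument in characteristic zero, and via an $F$-linear polarization and the Weil pairing in characteristic $p$. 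What your route buys is self-containedness --- you never need to quote the $e\mid\dim$ statement, whose proof is exactly the mechanism you spell out --- at the cost of extra work that the paper's citation makes unnecessary. Two points to tighten if you keep your version: in characteristic zero you should first base-change along an embedding $\bar k\hookrightarrow\C$ (endomorphism algebras only grow), and in characteristic $p$ the Weil pairing attached to an $F$-linear polarization is not literally $F\otimes\Q_\ell$-bilinear but only balanced, $\langle ax,y\rangle=\langle x,ay\rangle$ for $a\in F$; on a free rank-one $F\otimes\Q_\ell$-module a balanced form is still symmetric (or one first descends to a genuinely $F\otimes\Q_\ell$-bilinear form via the trace form), so your contradiction with a nondegenerate alternating form survives, but the step needs this rephrasing, together with a word on why an $F$-equivariant polarization exists.
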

\begin{proof}
  Let $A$ be isogenous to $\prod_{j=1}^r A_{j}^{n_j}$, where $A_i$ and
  $A_j$ are non-isogenous simple abelian varieties if $i\neq j$. Then
  one has a $\Q$-algebra embedding $\iota:F \to \End^0(A_j^{n_j})$ for
  each $j=1,\dots, r$, and hence $g \mid \dim A_j^{n_j}$. It follows
  that $r=1$. \qed  
\end{proof}

Put $d:=\dim A_1$ and $\Delta:=\End^0(A_1)$. One has $g=nd$ and
$\End^0(A)=\Mat_n(\Delta)$. The division algebra $\Delta$ admits a
positive (Rosati) involution $*$. We use the classification of
Albert for $\Delta$ (cf. \cite[Section 21]{mumford:av}).\\

\npr (Type I) The algebra $\Delta=K_0$ is a totally real 
number field. Since
$F\simeq \iota(F)\subset \Mat_n(K_0)$, 
one has $g\mid n [K_0:\Q]$. On the other hand,
one has $[K_0:\Q]\mid d$. It follows that $d=[K_0:\Q]$. The map
$\iota$ makes the $K_0$-vector space $V=K_0^n$ as an $F$-vector space
of dimension one. Therefore, $F$ can be regraded as a $K_0$-vector
space and hence $K_0$ is isomorphic to a subfield of $F$. We may
assume that $K_0$ is a subfield of $F$. Conversely, given a subfield
$K_0$ of $F$ of degree $d$, we choose an abelian variety $A_1$ so that
the endomorphism algebra $\End^0(A_1)$ is isomorphic to $K_0$. Take
$A:=A_1^n$, where $n:=g/d$. Since there is a $\Q$-algebra
embedding $\iota: F\to \Mat_n(K_0)$, we have an abelian variety
$(A,\iota)$ with RM by $F$ such that $\End^0(A)$ is isomorphic to
$\Mat_n(K_0)$. \\

 \npr (Type II) The algebra $\Delta$ is a totally indefinite
 quaternion algebra over a totally real number field $K_0$. Since
 $\Delta$ contains a totally real maximal subfield $K_1$, which has
 degree $2[K_0:\Q]$, one has $2 [K_0:\Q]\mid \dim A_1$. On the other
 hand, any maximal subfield of $\Mat_n(\Delta)$ has degree $2 n
 [K_0:\Q]$, which gives the other divisibility $g\mid 2n
 [K_0:\Q]$. Therefore, we have $g=2 n [K_0:\Q]$ and $\dim A_1=2
 [K_0:\Q]$. Since $F$ has the degree of maximal
 semi-simple commutative subalgebras of $\Mat_n(\Delta)$, the image of
 any embedding $\iota$ contains the center $K_0$. Therefore, we may
 assume that $K_0$ is a subfield of $F$ and $\iota$ is a $K_0$-algebra
 embedding of $F$ into $\Mat_n(\Delta)$. The field $F$ is isomorphic
 to a (maximal) subfield of $\Mat_n(\Delta)$ if and only if $F$ splits
 $\Delta$. The latter is equivalent to that for any place 
$v\in {\rm Ram}(\Delta/K_0)$, the set of ramified places of $K_0$ for
 $\Delta$, and any place $w|v$ of $F$, one has $[F_w:K_{0,v}]\equiv 0
 \pmod 2$. 
  
Conversely, suppose we have a subfield $K_0$ of $F$ with
$2[K_0:\Q]n=g$ for some positive integer $n$, 
and a totally indefinite quaternion algebra $\Delta$ over $K_0$ which
is split by $F$. Then there exists an abelian variety with RM by $F$
so 
that $\End^0(A)\simeq \Mat_n(\Delta)$. Indeed, we first take a complex
abelian variety $A_1$ with dimension $2[K_0:\Q]$ so that
$\End^0(A_1)\simeq \Delta$. Then put $A:=A_1^n$. The condition that
$F$ splits $\Delta$ implies that there is a $K_0$-algebra embedding
from $F$ into $\Mat_n(\Delta)$. This way we construct a (complex)
abelian   variety with RM by $F$ 
so that $\End^0(A)\simeq \Mat_n(\Delta)$. \\

 \npr (Type III) The algebra $\Delta$ is a totally definite
 quaternion algebra over a totally real number field $K_0$. Since $F$
 can be embedded in $\Mat_n(\Delta)$, whose maximal semi-simple commutative
 subalgebras have the same degree $2n[K_0:\Q]$, one has
 $g|2n[K_0:\Q]$. On the other hand, as the field $K_0$ acts on the
 abelian variety $A_1$ up to isogeny, one has $[K_0:\Q]|\dim A_1$;
 this gives the condition $n[K_0:\Q]|g$. Also, if the ground field $k$
 is of \ch zero, then the algebra $\Delta$ acts on the homology group
 $H_1(A_1,\Q)$. This gives the condition $4[K_0:\Q]|2\dim A_1$, or
 equivalently $2n[K_0:\Q]|g$. We have two cases:
 \begin{itemize}
 \item [(a)] $g=n[K_0:\Q]$. This case occurs only when $k$ is of \ch
   $p>0$ for some prime $p$. 
 \item [(b)] $g=2n[K_0:\Q]$.
 \end{itemize}
We first rule out the possibility of (b). Since $g$ is the degree of any
maximal semi-simple commutative subalgebra of 
$\Mat_n(\Delta)$, the image of $F$ under any embedding 
$\iota:F\to \Mat_n(\Delta)$ contains the
center $K_0$. Therefore, $F$ contains a subfield which is isomorphic
to $K_0$, 
and we may assume that the field $F$ contains $K_0$ and the embedding
$\iota$ is a $K_0$-algebra homomorphism. Since $F$ has the degree 
of $\Mat_n(\Delta)$ over $K_0$, the field $F$ can be embedded into
the simple algebra $\Mat_n(\Delta)$ if and only if $F$ splits
$\Delta$. But the latter is impossible 
because $\Delta$ is totally definite and $F$ is totally real. 

For the case (a), we have $\dim A_1=[K_0:\Q]$. In this case 
the abelian variety $A_1$ has
smCM. By a theorem of Grothendieck \cite[Section 22, p. 220]{mumford:av},
there are a finite field
extension $k'/k$, an abelian variety $A_0$ over a finite field $k_0$
contained in $k'$ and an isogeny $\varphi: A_1\otimes_k k' \to
A_0\otimes_{k_0} k'$. Enlarging $k_0$ if necessary, we can assume that
$\End^0(A_0\otimes_{k_0} k')=\End^0(A_0)$. We have
\[ \Delta=\End^0(A_1)\subset \End^0(A_1\otimes_{k} k')\simeq
\End^0(A_0\otimes_{k_0} k')=\End^0(A_0). \]  
We first show that $A_0$ (and $A_1$) is supersingular. We know that
both $\Delta$ and $\End^0(A_0)$ have the same degree (=$2\dim A_1$) 
of maximal semi-simple commutative subalgebras. The centralizer of
$K_0$ of $\End^0(A_0)$ is equal to the division algebra $\Delta$ 
and hence by bi-commutant theorem \cite[Theorem 7.11 and Corollary
7.13, p.~94-95]{reiner:mo} that the centralizer 
of $\Delta$ in $\End^0(A_0)$ is equal to
$K_0$. It follows that the center $Z$ of $\End^0(A_0)$ is contained in
the totally real field $K_0$. By the classification of endomorphism
algebras of abelian varieties over finite fields in Tate
\cite{tate:eav}, the field $Z$ is equal to $\Q$ or $\Q(\sqrt{p})$ and
$A_0$ is supersingular. We may enlarge the field $k_0$ so that $A_0$
is isogenous to the product of copies of supersingular elliptic curves
with endomorphism algebra $D_{p,\infty}$. Therefore,
$\End^0(A_0)=\Mat_{[K_0:\Q]}(D_{p,\infty})$, noting that $\dim
A_0=\dim A_1=g/n=[K_0:\Q]$. Then the division algebra
$\Delta$ is equal to the centralizer of $K_0$ in
$\Mat_{[K_0:\Q]}(D_{p,\infty})$. It follows that $\Delta$ is ramified
exactly at all Archimedean places and finite places $v$ of $K_0$ over
$p$ of odd degree, or equivalently $\Delta\simeq
D_{p,\infty}\otimes_{\Q} K_0$. 
If we assume  the hypothesis {\bf (H)} (at the end of
Section~\ref{sec:02}), then there are only two
possibilities for $\Delta$: either $\Delta=D_{p,\infty}$ or 
$\Delta=D_{\infty_1, \infty_2}$, the
definite quaternion algebra over the field $\Q(\sqrt{p})$ which is
ramified exactly at two Archimedean places $\infty_1$ and $\infty_2$. 
We have given all possibilities of the division algebras $\Delta$, 
and only the cases
$\Delta=D_{p,\infty}$ and $\Delta=D_{\infty_1,\infty_2}$ can occur as
endomorphism algebras of simple abelian varieties over finite fields. 

We now show that the field $F$ contains a subfield which is isomorphic
to $K_0$, so that we may assume that $F$ contains $K_0$ and that the
embedding $\iota$ is a $K_0$-algebra homomorphism. Let $x\mapsto
\bar{x} $ be 
the canonical involution of $\Delta$, which is the unique positive
involution. Define a positive involution $*$ on $\Mat_n(\Delta)$ by
$(a_{ij})^*=(\ol {a_{ji}}\,)$. We know that for any embedding
$\iota:F\to \Mat_n(\Delta)$, there is a positive involution $*_1$
which leaves the image $\iota(F)$ invariant and every element of
$\iota(F)$ invariant. On the other hand, one can show that there is an
isomorphism of algebras with involution $(\Mat_n(\Delta),*_1)\simeq
(\Mat_n(\Delta),*)$. This follows from the Noether-Skolem theorem,
the fact that the unitary group $U(\Mat_n(\Delta),*)$ is semi-simple
and simply-connected, and the Kneser theorem on the $H^1$-vanishing for
simply-connected groups over non-Archimedean local fields. For the
details of this argument, see for example \cite[Section 2]{yu:c}. 
Therefore, we may assume that the image of $F$ is fixed by $*$. 
Since the maximal
semi-simple commutative subalgebras of $\Mat_n(\Delta)$ stable by the
involution has degree
$n[K_0:\Q]$ over $\Q$, the image $\iota(F)$ contains the center
$K_0$. This shows that $F$ contains a subfield that is isomorphic to
$K_0$.    
 
As $[F:K_0]=n$, a $K_0$-algebra embedding
$\iota:F\to \Mat_n(\Delta)$ always exists if $\Delta$ is the
endomorphism algebra of a simple abelian variety $A_1$ of dimension
$[K_0:\Q]$. The abelian variety $(A=A_1^n,\iota)$ with RM by $F$ has
endomorphism algebra $\End^0(A)\simeq \Mat_n(\Delta)$.  \\  

\npr (Type IV) The algebra $\Delta$ is a central simple algebra over a
 CM field $K$ with maximal real number field $K_0$. For any
 finite place $v$ of $K$, one has
 \begin{equation}
   \label{eq:31}
   \inv_v(\Delta)+\inv_{\sigma(v)}(\Delta)=0, \quad \text{and}\quad  
  \inv_v(\Delta)=0 \quad \text{if}\quad \sigma(v)=v,  
 \end{equation}
 where $\sigma$ is the non-trivial automorphism of
 $K/K_0$. This is the necessary and sufficient condition 
 for the central simple $\Delta$ that admits a positive involution.
  Let
 $m:=\deg(\Delta)$. The algebra $\Delta$ contains a (maximal) totally
 real subfield $F'$ of degree $m[K_0:\Q]$ over $\Q$, therefore
 one has $m[K_0:\Q]|\dim A_1$, or equivalently $nm[K_0:\Q]\mid g$. 
On the other hand, since the field $F$ can be embedded into
 $\Mat_n(\Delta)$, one has the condition $g|nm[K_0:\Q]$. This shows
 $\dim A_1=m[K_0:\Q]$ and $A_1$ has smCM. Since any maximal 
 semi-simple commutative subalgebra of $\Mat_n(\Delta)$ that is stable
 for a positive
 involution has degree $nm[K_0:\Q]$, which is also equal to $[F:\Q]$, 
 the
 image $\iota(F)$ contains a subfield which is isomorphic to
 $K_0$. Therefore, we may assume that $F$ contains $K_0$ and that 
 the embedding map $\iota$ is a $K_0$-algebra homomorphism. 

 We now determine the condition for $\Delta$ so that the totally real
 field $F$ can be embedded into $\Mat_n(\Delta)$ over $K_0$. Note that 
$[F:K_0]=\deg(\Mat_n(\Delta)/K)$. Put $L=F\otimes_{K_0} K$. The CM field
$L$ is isomorphic to a maximal subfield of $\Mat_n(\Delta)$. The
 local-global principle 
(cf. \cite[Theorem~A.1]{prasad-rapinchuk:metakernel96} and
\cite{yu:embed}) asserts that this holds if and only if 
for any (finite) ramified place $v$ of $K$ for $\Delta$, one has
 $[L_w:K_v]\cdot \inv_v(\Delta)\in \Z$. If $v$ is fixed by $\sigma$,
 then $\inv_v(\Delta)=0$ and hence the condition is satisfied
 automatically. Let $v$ be a finite ramified place of $K$
 and  $v_0$ be the place of $K_0$
 below $v$; the place $v_0$ splits in $K$. 
 For any place $w_0$ of $F$
 lying over $v_0$, one also has that $w_0$ splits in $L$. Therefore
 $[F_{w_0}:K_{0,v_0}]=[L_w:K_v]$, where $w$ is any place of $L$ over
 $w_0$. One concludes that the field $F$ can be embedded into
 $\Mat_n(\Delta)$ 
 over $K_0$ if and only if for any (finite) ramified place $v$ of $K$
 for $\Delta$ (note: $\sigma(v)\neq v$), 
 \begin{equation}
   \label{eq:32}
[F_{w_0}:K_{0,v_0}]\cdot \inv_v(\Delta)=0\quad (\text{in\ }\
\Q/\Z),\quad \forall\, w_0|v_0.      
 \end{equation}

Conversely, suppose we are given a central division algebra 
$\Delta$ over a CM field $K$ with maximal totally real field $K_0$ 
that satisfies the local condition (\ref{eq:31}). Suppose also that 
(1) $K_0$ is contained in $F$, (2) $mn[K_0:\Q]=[F:\Q]$, where
$m=\deg(\Delta/K)$, and (3) the condition (\ref{eq:32}) is satisfied.
Then there is an abelian variety $(A,\iota)$ with RM by $F$ such that
$\End^0(A)\simeq \Mat_n(\Delta)$. Indeed, we take a complex abelian
variety $A_1$ of dimension $m[K_0:\Q]$ such that $\End^0(A_1)\simeq
\Delta$. Above discussion shows that there is a $K_0$-algebra
embedding $\iota: F\to \Mat_n(\Delta)$. Set $A:=A_1^n$, then the pair 
$(A,\iota)$ has the desired property.  \\

We summarize the classification in the following theorem. This is 
a result of Chai \cite[Lemma 6]{chai:ho}, while we make it 
more explicit about simple algebras that can
actually occur as endomorphism algebras of RM abelian 
varieties over more general (unspecified) ground fields. 

\begin{thm}\label{32}
  Let $(A,\iota)$ be a $g$-dimensional abelian variety with RM by $F$
  over a field $k$. Then $\End^0(A)\simeq \Mat_n(\Delta)$ for a
  positive integer $n$ and a division algebra $\Delta$. \\

\npr {\rm (Type I)} The algebra $\Delta=K_0$ is a totally real 
number field. Then the field $K_0$ can be embedded as a subfield in
$F$ with $n[K_0:\Q]=g$. 

Conversely, given a subfield $K_0$ of $F$ of
degree $d$, then there is an abelian variety $(A,\iota)$ with RM by
$F$ such that $\End^0(A)\simeq \Mat_n(K_0)$, where $n:=g/d$. \\

\npr {\rm (Type II)} The algebra $\Delta$ is a totally indefinite
 quaternion algebra over a totally real number field $K_0$. Then the
 field $K_0$ can be embedded as a subfield of $F$ with $2n[K_0:\Q]=g$
 and $F$ splits the quaternion algebra $\Delta$. 

 Conversely, given a
 subfield $K_0$ of $F$ with $2n[K_0:\Q]=g$ for some positive integer
 $n$, and $\Delta$ an indefinite quaternion algebra over $K_0$ such
 that $F$ splits $\Delta$. Then there is an abelian variety
 $(A,\iota)$ with RM by $F$ such that $\End^0(A)\simeq
 \Mat_n(\Delta)$. \\

\npr {\rm (Type III)} The algebra $\Delta$ is a totally definite
 quaternion algebra over a totally real number field $K_0$. Then the
 field $K_0$ can be embedded as a subfield of $F$  with
 $n[K_0:\Q]=g$. The characteristic of the base field $k$ is a prime
 $p>0$, and $A$ is supersingular. Moreover, we have $\Delta\simeq
 D_{p,\infty}\otimes_\Q {K_0}$. 
 Under the assumption of the hypothesis {\bf (H)}, we have
  $\Delta\simeq D_{p,\infty}$ with $K_0=\Q$, or 
 $D_{\infty_1,\infty_2}$ with $K_0=\Q(\sqrt{p})$. 

 Conversely, suppose $(\Delta, K_0)$ is one of the above two cases
 and suppose that $K_0$ is contained in $F$. Then there exists an
 abelian 
 variety $(A,\iota)$ with RM by $F$ over a finite field such that
 $\End^0(A) \simeq \Mat_n(\Delta)$, where $n=g/[K_0:\Q]$. \\

\npr {\rm (Type IV)} 
 The algebra $\Delta$ is a central simple algebra over a
 CM field $K$ with maximal real number field $K_0$. Then the field
 $K_0$ can
 be embedded in $F$ with $g=nm[K_0:\Q]$ where $m=\deg(\Delta/K)$. For
 any finite ramified place $v$ of $K$ 
 for $\Delta$, we have
 \begin{equation}
   \label{eq:33}
[F_{w_0}:K_{0,v_0}]\cdot \inv_v(\Delta)=0\quad (\text{in\ }\
\Q/\Z),\quad 
\forall\, w_0|v_0,      
 \end{equation} 
where $v_0$ is the place of $K_0$ below $v$. 

Conversely, let $\Delta$ be a central division algebra over a CM
field $K$ with maximal totally real field $K_0$ that admits a positive
involution. Suppose that 
(1) $K_0$ is contained in $F$, (2) $mn[K_0:\Q]=[F:\Q]$ for some
positive integer $n$, where $m=\deg(\Delta/K)$, and (3) the condition
(\ref{eq:33}) is satisfied. 
Then there is an abelian variety $(A,\iota)$ with RM by $F$ such that
$\End^0(A)\simeq \Mat_n(\Delta)$.
\end{thm}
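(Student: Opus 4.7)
The plan is to follow Albert's classification of division algebras with positive involution, treating each of the four types separately, and for each type pinning down the parameters by comparing $\dim A = g$ with dimensions of maximal (semi-simple) commutative subalgebras of $\Mat_n(\Delta)$.

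First I would set up the reduction. Using the preceding lemma, write $A \sim A_1^n$ for a simple abelian variety $A_1$, put $d := \dim A_1$, $\Delta := \End^0(A_1)$, so that $g = nd$ and $\End^0(A) = \Mat_n(\Delta)$. Albert's theorem then limits $\Delta$ to the four types. Any $F$-action on $A$ gives a $\Q$-algebra embedding $F \hookrightarrow \Mat_n(\Delta)$; since $[F:\Q]=g$ and any maximal commutative semi-simple subalgebra of $\Mat_n(\Delta)$ has $\Q$-dimension equal to the degree of $\Mat_n(\Delta)$ over its center times $[K_0:\Q]$ (where $K_0$ is the center of $\Delta$, viewed as a subfield of $F$ after enlarging), this forces $F$ to contain (a copy of) $K_0$ in Types II, III, IV. For Type I, the embedding $F \hookrightarrow \Mat_n(K_0)$ plus the fact that $F$ acts with one-dimensional orbit on $V = K_0^n$ realizes $K_0$ as a subfield of $F$ directly.

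For the embedding criteria, Types I and II are handled by the splitting condition: in Type II, the embedding $F \hookrightarrow \Mat_n(\Delta)$ exists as a $K_0$-algebra map iff $F$ splits $\Delta$, which by the local-global principle (essentially Proposition 2.2, together with \cite{prasad-rapinchuk:metakernel96} or \cite{yu:embed}) translates to the local degree parity condition on $[F_w : K_{0,v}]$. Type IV similarly uses local-global for embedding the CM field $L = F \otimes_{K_0} K$ as a maximal subfield, giving the clean condition (\ref{eq:33}); the key point is that at a ramified place $v$ of $K$ one has $\sigma(v) \neq v$ by (\ref{eq:31}), so $v_0$ splits in $K$ and in $L$, making $[L_w : K_v] = [F_{w_0} : K_{0,v_0}]$. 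The converse direction in all of these cases is constructive: pick a complex abelian variety $A_1$ with $\End^0(A_1) \simeq \Delta$ of the correct dimension, and set $A := A_1^n$.

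The main obstacle is Type III, since $\Delta$ is totally definite while $F$ is totally real, so no embedding exists unless we are in positive characteristic; ruling out case (b) is a short parity argument, but pinning down $\Delta$ in case (a) requires real work. The plan here is: apply Grothendieck's theorem (cf.\ \cite{oort:cm}, \cite{yu:cm}) to reduce $A_1$ up to isogeny to an abelian variety $A_0$ over a finite field with $\End^0(A_0) \supset \Delta$ and identical degree of maximal semi-simple commutative subalgebras; then use the bi-commutant theorem (\cite[7.11, 7.13]{reiner:mo}) to identify the centralizer of $\Delta$ in $\End^0(A_0)$ as $K_0$, forcing the center of $\End^0(A_0)$ to lie in the totally real $K_0$; Tate's classification \cite{tate:eav} then forces $A_0$ to be supersingular with $\End^0(A_0) \simeq \Mat_{[K_0:\Q]}(D_{p,\infty})$, and centralizing $K_0$ recovers $\Delta \simeq D_{p,\infty} \otimes_\Q K_0$. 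Under (H), the shape of $K_0$ collapses to $\Q$ or $\Q(\sqrt{p})$ by reducing to a subfield of definition. Finally, invariance of $F$ under a positive involution (needed to force $K_0 \subset F$) uses Noether-Skolem plus the Kneser $H^1$-vanishing theorem to conjugate any positive involution on $\Mat_n(\Delta)$ to the standard one; this is the argument I would lift from \cite[\S2]{yu:c}. Assembling these pieces gives the statement.
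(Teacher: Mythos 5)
Your plan follows the paper's proof essentially step by step: the reduction $A\sim A_1^n$ and Albert's classification, the dimension counts fixing $\dim A_1$ in each type, the local--global criteria for Types II and IV (including the observation that a ramified place $v$ of $K$ satisfies $\sigma(v)\neq v$, so $v_0$ splits and $[L_w:K_v]=[F_{w_0}:K_{0,v_0}]$, giving (\ref{eq:33})), and for Type III the Grothendieck reduction to a finite field, the bicommutant theorem plus Tate's classification forcing supersingularity and $\Delta\simeq D_{p,\infty}\otimes_\Q K_0$, hypothesis {\bf (H)} for the two explicit possibilities, and the Noether--Skolem/Kneser normalization of positive involutions. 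The constructive converses for Types I, II, IV via a complex $A_1$ with $\End^0(A_1)\simeq\Delta$ are also exactly the paper's.

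There is, however, one step whose stated justification would fail: forcing $K_0\subset F$. You claim that a maximal commutative semi-simple subalgebra of $\Mat_n(\Delta)$ has $\Q$-dimension equal to $\deg(\Mat_n(\Delta))\cdot[K_0:\Q]=g$, so that $\iota(F)$ is such a maximal subalgebra and hence contains the center; this count is correct only in Type II. In Type III (in the relevant case $g=n[K_0:\Q]$) and in Type IV (where the center of $\Delta$ is the CM field $K$, not $K_0$), plain maximal commutative semi-simple subalgebras of $\Mat_n(\Delta)$ have $\Q$-dimension $2g$, so $\iota(F)$ is not maximal commutative; indeed, if your count were right in Type IV it would force the CM field $K$ into the totally real field $F$, which is absurd. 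The paper instead counts maximal commutative semi-simple subalgebras \emph{stable under a positive involution}, which have degree $n[K_0:\Q]$, resp.\ $nm[K_0:\Q]=g$, and this is precisely why the Rosati involution together with the Noether--Skolem and Kneser arguments is needed. You invoke that machinery for Type III, but for Type IV your sketch leaves $K_0\subset F$ (and hence the identity $[F:K_0]=\deg(\Mat_n(\Delta)/K)$ on which your local--global step rests) supported only by the incorrect count; the same involution-stability argument must be applied there. A smaller omission: you give no converse for Type III, and it cannot be the complex construction used in the other types (that is exactly the excluded case (b)); one must take a simple supersingular abelian variety $A_1$ of dimension $[K_0:\Q]$ over a finite field with $\End^0(A_1)\simeq\Delta$, after which a $K_0$-algebra embedding $F\hookrightarrow\Mat_n(\Delta)$ exists automatically because $[F:K_0]=n$.
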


\section*{Acknowledgments}
  The author thanks C.-L. Chai for his constant support and
  encouragement, as well as his work \cite{chai:ho} where the second
  part of this paper is based on. 
  The author was partially supported by grants NSC
  100-2628-M-001-006-MY4 and AS-99-CDA-M01. Finally he thanks the
  referee for helpful suggestions on the organization that improve the
  presentation of this paper.

\end{document}